\numberwithin{equation}{section}
\newtheorem{lemma}{Lemma}[section]
\newtheorem{proposition}{Proposition}[section]
\newtheorem{theorem}{Theorem}[section]
\title{Global Dynamics and Existence of Traveling Wave Solutions for A Three-Species  Models}
\author[1]{Fanfan Li%\footnote{Research was partially supported by National Council of Science, Republic of China.}
}
\author[1]{Zhenlai Han%\footnote{Research was partially supported by National Council of Science, Republic of China.}
}
\author[2]{Ting-Hui Yang\footnote{Research was partially supported by Ministry of Science and Technology, Taiwan (R O C).}}
\affil[1]{School of Mathematical Sciences, University of Jinan, Jinan, Shandong 250022, P R China.}
\affil[2]{Department of Mathematics, Tamkang University, Tamsui Dist., New Taipei City 25137, Taiwan.}
\begin{document}
\date{}
\maketitle

\begin{abstract}
In this work, we investigate the system of three species ecological model involving one predator-prey subsystem coupling with a generalist predator with negative effect on the prey. Without diffusive terms, all global dynamics of  its corresponding reaction equations are proved analytically for all classified parameters. With diffusive terms, the transitions of different spatial homogeneous solutions, the traveling wave solutions, are showed by higher dimensional shooting method, the  Wazewski method. Some interesting numerical simulations are performed, and biological implications are given.
\end{abstract}

\noindent {\bf 2010 Mathematics Subject Classification.} Primary: 37N25, 35Q92, 92D25, 92D40.\\
\noindent {\bf Keywords : }Two predators-one prey system, extinction, coexistence, global asymptotically stability, traveling wave solutions, Wazewski principle.
 
\section{Introduction}
%\keyword{}
%\textbf{Keywords}: $h$-deference equations, oscillation, fractional.
%\\
%\textbf{Mathematics Subject Classification 2010}: 34A10, 34C10, 26A33.}
In this work, we consider an ecological system of three species with diffusion as follows,
\begin{equation}\label{3pdemodel}
\left\{
\begin{aligned}
\partial_{t} u &=r_{1}u(1-u)-a_{12}uv-a_{13}uw,\\
\partial_{t} v &=r_{2}v(1-v)+a_{21}uv,\\
\partial_{t} w &=d\Delta w-\mu w+a_{31}uw,
\end{aligned}
\right.
\end{equation}
where parameters $d$ is the diffusive coefficient for species $w$, $r_{1}$ and $r_{2}$ are the intrinsic growth rates of species $u$ and $v$ respectively, and $\mu$ is the death rate of the predator $w$. The nonlinear interactions between species is the Lotka-Volterra type interactions between species where $a_{ij}(i<j)$ is the rate of consumption and $a_{ij}(i>j)$ measures the contribution of the victim (resource or prey) to the growth of the consumer \cite{Namba2008}. Here, the species $u$ is the nutrient resource of the predator-prey system, the species $v$ is called the generalist predator which can take advantage of various resources from two trophic levels, and the species $w$ is called the specialist predator which has a limited diet from $u$. To simplify the analysis, we only consider the species $w$ has diffusion effect.

System (\ref{3pdemodel}) without diffusive terms is given by the following system of three ODEs:
\begin{equation}\label{3odemodel}
\left\{
\begin{aligned}
\dot u&=r_{1}u(1-u)-a_{12}uv-a_{13}uw,\\
\dot v &=r_{2}v(1-v)+a_{21}uv,\\
\dot w &=-\mu w+a_{31}uw.
\end{aligned}
\right.
\end{equation}
The whole system can be seen as a predator-prey subsystem, $u$-$w$ subsystem, coupled with an extra species $v$ with negative effect on species $u$. The model can be used to describe parasitism, consumption or predation in the community of plants species \cite{Ford2010}. It is well-known that the ecological principle of competitive exclusion holds for the following classical two predators-one prey model \cite{Mcgehee1977},
\begin{equation*}
\left\{
\begin{aligned}
\dot u &=r_{1}u(1-u)-a_{12}uv-a_{13}uw,\\
\dot v &=-\mu_{1} v+a_{21}uv,\\
\dot w &=-\mu_{2} w+a_{31}uw.
\end{aligned}
\right.
\end{equation*}
However, by comparing these two models, system \eqref{3odemodel} is a modified two-predators one-prey model where one, $v$, is a generalist predator and another one, $w$, is a specialist predator. There are a fundamental difference, in Section 2, where we show the positive equilibrium of system (\ref{3odemodel}) can not only exist but it is also globally asymptotically stable, that is, two predators can co-exist.

Reaction-diffusion systems are often characterized by the existence of spatial homogeneous equilibria when the diffusion terms vanish. If there are more than one equilibrium, then we can expect a possible transition between them. These transitions are described by reaction-diffusion waves. Propagations of flames, migration of biological species, or tumor growth are among many examples of such phenomena \cite{Murray2003,Volpert1994}. In the PDE perspective, the existence of traveling wave solutions for reaction-diffusion systems in an important and interesting subject which has attracted considerable attentions \cite{Dunbar1983,Dunbar1984,Dunbar1986, Gardner1983, Jones1993, Hsu2012, Huang2012,Huang2016, Zou1997existence}. The phenomena of traveling wave solutions of reaction-diffusion systems have been widely studied \cite{Volpert1994} from the single equation with nonlinearity in monostable type \cite{Fisher1937} or bistable type \cite{KPP1937} to monotone systems \cite{liang2007asymptotic}. There have been great successes in the existence, uniqueness, stability and spreading speed of traveling wave solutions of monotone system \cite{Chen1997existence, liang2007asymptotic, Smith2000global}. 
\medskip

Unfortunately, our system which has an important nonlinear interaction, predator-prey type, between different species is  non-monotone. In the past three decades, by using different methods including the shooting method, Conley index and upper-lower solutions method, the existence of traveling wave solutions has been established for various predator-prey systems. See Dunbar \cite{Dunbar1983,Dunbar1984,Dunbar1986}, Gardner and Smoller \cite{Gardner1983}, Jones et al. \cite{Jones1993}, Huang \cite{Huang2012,Huang2016}, Hsu et al. \cite{Hsu2012}, Lin et al. \cite{Lin2015}, and references cited therein. In this work, we will use the so-called higher dimensional shooting method, Wazewski method to show the existence of positive traveling wave solutions from one unstable equilibrium to a stable one. Here we briefly describe this framework of the shooting method.\medskip

To show the existence of traveling wave solution, by using the moving coordinates, the reaction-diffusion system is transformed into a ODE system, and the existence of traveling wave solutions connecting two different equilibria is equivalent to a heteroclinic orbit of the corresponding ODE system. we analyze the structure of unstable manifold of the unstable equilibrium first. Then we construct a variant of Wazewski set $\Sigma$ with the unstable equilibrium as its boundary point and also containing the stable equilibrium. Third, dynamics of the system on all boundary of $\Sigma$ should be clarified. Next, pick up a curve contained in the unstable manifold with two end points on the ``exit set'' of boundary of $\Sigma$. It is clear that all solutions with initial conditions on this curve will attend to the unstable equilibrium as $t\to-\infty$. Then show that there exists a particular point on this curve, and the solution starting from this point will stay in the interior of $\Sigma$ for all $t\ge 0$.
%First, we discuss the stability of all boundary equilibrium and the global asymptotically stable of the positive equilibria of system (\ref{3odemodel}). Secondly, we find a heteroclinic orbit of the system (\ref{3odemodel}) connecting equilibria $E_{1}$ to $E_{*}$.
Finally, we define a nonempty subset of $\Sigma$ which contains the point stayed in $\Sigma$ for all positive time under the action of the ODE system. Then we can get our main result by constructing a Lyapunov function and use the LaSalle's invariance principle on this non-empty set. \medskip

In this work, our main contributions are as follows. First, for the corresponding reaction system (\ref{3odemodel}) we clarify completely the existence, non-existence, and all asymptotically states and their global stabilities are investigated theoretically. Secondly, we show the existence of traveling wave solutions  are obtained for a particular three species ecosystem with predator-prey interaction. We use the Wazewski principle to show the existence of  traveling wave solutions. Thought, the method is similar to \cite{Huang2012, Huang2016}, our system is three dimensional. Third,  numerical simulations are performed for some interesting initial functions. Finally, some biological interpretations are given.\medskip
%In this work, our main contributions are as follows:
%\begin{enumerate}
%\item ODE : All asymptotically states and their global stabilities are investigated theoretically.
%\item PDE : The existence of TWS solutions are obtained for a particular three species ecosystem with predator-prey interaction. We use the Wazewski principle to show the existence of TWS. Thought, the method is similar to \cite{Huang2012, Huang2016}, our system is three dimensional.
%\item Numerical simulations are performed for some interesting initial functions.
%\item Some biological interpretations are given.
%\end{enumerate}

The rest of this article is organized in the following manner. In Section 2, we first consider the corresponding reaction equations of (\ref{3pdemodel}) which is a system of three ODEs. The existence of boundary equilibria and coexistence equilibrium are obtained with some conditions. Moreover, we find the necessary and sufficient condition of global asymptotic stability of the positive equilibrium. In Section 3, by using shooting method, the existence and nonexistence  of traveling wave solutions of (\ref{3pdemodel}) are obtained. In Section 4, the numerical simulations are performed and presented. Finally, some remarks and discusses in biological meanings are also given in the last section.
%%%%%%%%%%%%%%%%%%%%%%%%%%%%%%%%%%
\section{Global Dynamics of the Corresponding ODE System}

In this section, we investigate dynamics of the ODE system \eqref{3odemodel} and the essential assumptions to guarantee the existence and local stabilities of all equilibria. Moreover, two extinction results and the global stability of positive equilibrium are showed.

\subsection{Preliminaries}
It is easy to see that $uv$-, $uw$-, and $vw$-planes are invariant subspaces of \eqref{3odemodel}. Hence solutions of \eqref{3odemodel} will be positive/non-negative if they start from a positive/non-negative point. Moreover, we can show that solutions are bounded.
\begin{lemma}\label{L-1}
The solutions of system (\ref{3odemodel}) are bounded.
\end{lemma}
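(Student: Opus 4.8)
The plan is to bound the three components successively, exploiting the triangular (cascade) structure of the coupling in \eqref{3odemodel}: the $u$-equation is damped by logistic self-limitation, the $v$-equation is logistic with a forcing proportional to $u$, and the $w$-equation is linear in $w$ with coefficient $-\mu + a_{31}u$. First I would fix a positive/non-negative initial condition; by the invariance of the coordinate planes noted just before the lemma, the solution stays in the closed positive octant, so all three components are non-negative and we only need upper bounds. For the $u$-component, since $v,w\ge 0$ we have $\dot u \le r_1 u(1-u)$, so a standard differential-inequality (comparison with the scalar logistic equation) argument gives $\limsup_{t\to\infty} u(t)\le 1$ and, more precisely, $u(t)\le \max\{u(0),1\}$ for all $t\ge 0$; in particular $u$ is bounded.

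Next, for $v$: using the bound on $u$, from the second equation $\dot v \le r_2 v(1-v) + a_{21}(\sup_{t\ge0}u(t))\,v = v\bigl(r_2 + a_{21}\sup u - r_2 v\bigr)$, which is again a logistic-type inequality with a finite linear coefficient, so comparison yields $v(t)\le \max\{v(0),\, 1 + a_{21}(\sup u)/r_2\}$ and hence $v$ is bounded; let $M_v := \sup_{t\ge 0} v(t)$. Finally, for $w$, the direct linear estimate $\dot w = w(-\mu + a_{31}u)\le (a_{31}\sup u)\,w$ only gives exponential growth, so instead I would bound $w$ by combining it with $u$: consider an auxiliary function such as $\sigma(t) := a_{31}u(t) + a_{13}w(t)$ (or more symmetrically $\sigma = \tfrac{a_{31}}{a_{13}}u + w$). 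Differentiating and using the $u$- and $w$-equations, the cross terms $\pm a_{13}a_{31}uw$ cancel, leaving $\dot\sigma \le a_{31}r_1 u - a_{31}a_{12}uv - \mu w \le a_{31}r_1(\sup u) - \mu w$, and adding $\mu\sigma$ to both sides gives $\dot\sigma + \mu\sigma \le a_{31}r_1(\sup u) + \mu a_{31}(\sup u) =: C$, a constant. Grönwall's inequality then yields $\sigma(t)\le \max\{\sigma(0),\,C/\mu\}$, so $w(t)\le \sigma(t)$ is bounded.

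The main obstacle is precisely the $w$-equation: because $-\mu + a_{31}u$ can be positive, $w$ is not controlled by its own equation alone, so one must find the right linear combination of $u$ and $w$ in which the predator–prey cross terms cancel and the logistic damping of $u$ (plus the $-\mu w$ dissipation) produces a coercive differential inequality. Once the correct combination and the ordering $u \to v \to w$ of the estimates are set up, each step is a routine application of a scalar comparison/Grönwall argument, and one concludes that the solution is bounded (in fact ultimately uniformly bounded, with the asymptotic bounds $\limsup u \le 1$, $\limsup v \le 1 + a_{21}/r_2$, and a corresponding explicit bound for $w$), which also shows solutions exist for all $t\ge 0$.
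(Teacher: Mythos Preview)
Your proposal is correct and follows essentially the same route as the paper: comparison with the logistic equation for $u$ and $v$, and then a linear combination of $u$ and $w$ (the paper uses $u+\tfrac{a_{13}}{a_{31}}w$, which is your $\sigma$ up to a scalar factor) in which the predator--prey cross terms cancel, yielding a linear differential inequality handled by comparison/Gr\"onwall. Aside from a harmless coefficient slip in your $\dot\sigma$ estimate (the $w$-term should carry $a_{13}\mu$ rather than $\mu$ for your first choice of $\sigma$, but the conclusion is unaffected), the argument and the asymptotic bounds match the paper's.
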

\begin{proof}
From the first equation in system (\ref{3odemodel}) we have
$$
\frac{du}{dt}=r_{1}u(1-u)-a_{12}uv-a_{13}uw \leqslant r_{1}u(1-u),
$$
so that the comparison principle implies that
$$
\limsup_{t\rightarrow\infty}u\leqslant1,
$$
i.e. $0<u\leqslant1$.
Similarly we can get $0<v\leqslant1+\frac{a_{21}}{r_{2}}$.\\
Let $M=r_{1}+\mu$ and $D=\mu$. From the first and third equation in (\ref{3odemodel}) we have
\begin{equation*}
\begin{aligned}
\frac{d}{dt}\left(u+\frac{a_{13}}{a_{31}}w\right)&=r_{1}u(1-u)-a_{12}uv-a_{13}uw-\frac{a_{13}}{a_{31}}w+a_{13}uw\\
&\leq r_{1}u-\frac{a_{13}}{a_{31}}w\\
&=Mu-D(u+\frac{a_{13}}{a_{31}}w)\\
&\leq M-D(u+\frac{a_{13}}{a_{31}}w).
\end{aligned}
\end{equation*}
Using the comparison principle again, we have
$$
\limsup_{t\rightarrow\infty}\left(u+\frac{a_{13}}{a_{31}}w\right)\leq \frac{M}{D}.
$$
\end{proof}

\subsection{Assumptions and Two Extinction Results}

From now on, we always make the assumptions,
\begin{enumerate}[{\rm\ (H1)}]
\item  $r_{1}>a_{12}$,
\item  $a_{31}>\mu$,
\end{enumerate}
which will be used in the rest of the article, because of the following two extinction results.

\begin{lemma}\label{L-2}
If $r_{1}\leq a_{12}$, then $\lim_{t\rightarrow\infty}u(t)=0$ and $\lim_{t\rightarrow\infty}w(t)=0$.
\end{lemma}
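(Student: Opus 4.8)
The plan is to squeeze the generalist predator $v$ from below, use that lower bound to force $u\to 0$, and then let $u\to 0$ collapse $w$ exponentially. Throughout I would work with a solution having $v(0)>0$, so that $v(t)>0$ for all $t$ (the plane $\{v=0\}$ is invariant, hence uniqueness prevents $v$ from vanishing).

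\emph{Step 1 (a lower bound for $v$).} Since $u,v\ge 0$, the second equation gives $\dot v=r_2v(1-v)+a_{21}uv\ge r_2v(1-v)$. Comparing with the scalar logistic equation $\dot z=r_2z(1-z)$, $z(0)=v(0)>0$, whose solution tends to $1$, the comparison principle yields $\liminf_{t\to\infty}v(t)\ge 1$. Hence for each $\varepsilon>0$ there is $T_\varepsilon$ with $v(t)\ge 1-\varepsilon$ for all $t\ge T_\varepsilon$.

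\emph{Step 2 ($u\to 0$).} Because $u,w\ge 0$, we have $\dot u=u\bigl(r_1(1-u)-a_{12}v-a_{13}w\bigr)\le u\,(r_1-a_{12}v)$, so $\dot u\le u\bigl(r_1-a_{12}(1-\varepsilon)\bigr)$ for $t\ge T_\varepsilon$. If $r_1<a_{12}$, choosing $\varepsilon<1-r_1/a_{12}$ makes $r_1-a_{12}(1-\varepsilon)<0$, and Gronwall's inequality gives $u(t)\to 0$ exponentially. The borderline case $r_1=a_{12}$ is the main obstacle, since this estimate no longer decays; here I would combine the first two equations logarithmically. A short computation using $r_1=a_{12}$ gives
\begin{equation*}
\frac{d}{dt}\!\left(\ln u-\frac{r_1}{r_2}\ln v\right)=-\Bigl(r_1+\frac{r_1a_{21}}{r_2}\Bigr)u-a_{13}w\le 0 .
\end{equation*}
Thus $V(t):=\ln u(t)-\frac{r_1}{r_2}\ln v(t)$ is nonincreasing and converges in $[-\infty,V(0)]$. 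By Lemma \ref{L-1} and Step 1, $v$ is bounded above and bounded below away from $0$, so $\ln v$ is bounded; therefore either $V(t)\to-\infty$, whence $\ln u(t)\to-\infty$ and $u(t)\to 0$, or $V(t)$ has a finite limit, in which case $\int_0^{\infty}u(t)\,dt<\infty$. In the latter case, since $u,v,w$ are bounded (Lemma \ref{L-1}) the right-hand sides are bounded, so $u$ is uniformly continuous and Barbalat's lemma forces $u(t)\to 0$. Either way $u(t)\to 0$.

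\emph{Step 3 ($w\to 0$) and the hard point.} Once $u(t)\to 0$, pick $T$ with $a_{31}u(t)\le\mu/2$ for $t\ge T$; then $\dot w=w(-\mu+a_{31}u)\le-\tfrac{\mu}{2}w$ on $[T,\infty)$, so $w(t)\le w(T)e^{-\mu(t-T)/2}\to 0$. I expect the equality case $r_1=a_{12}$ in Step 2 to be the only genuine difficulty — the naive comparison degenerates there, and one must replace it with the monotone functional $V$ plus a Barbalat-type compactness argument. A caveat the proof makes visible: positivity of $v(0)$ is essential, since on the invariant plane $v\equiv 0$ the $u$–$w$ subsystem is a classical predator–prey model which, under \textrm{(H2)}, does not drive $u$ to $0$.
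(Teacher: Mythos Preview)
Your argument is correct. Step~1, the strict case of Step~2, and Step~3 are essentially the paper's proof verbatim. The only place you diverge is in the borderline case $r_1=a_{12}$. There the paper argues directly: once $v(t)\ge 1$ eventually, one has $\dot u/u=r_1(1-u)-r_1v-a_{13}w\le 0$, so $u$ is eventually monotone and has a limit $\xi\ge 0$; a positive limit $\xi>0$ is then ruled out by passing to the limit in the first equation (via a Markus-type limiting argument), which forces $r_1(1-\xi)=r_1\limsup v+a_{13}\limsup w\ge r_1$, a contradiction. You instead build the monotone functional $V=\ln u-(r_1/r_2)\ln v$, read off $\int_0^\infty u\,dt<\infty$ from its convergence, and finish with Barbalat's lemma. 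Your route avoids the somewhat informal limit manipulation the paper performs and does not need the eventual monotonicity of $u$ itself, at the modest price of invoking uniform continuity and Barbalat. Both arguments rest on the same engine, namely $\liminf_{t\to\infty}v(t)\ge 1$, and your explicit caveat that $v(0)>0$ is required is a hypothesis the paper leaves implicit.
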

\begin{proof}
It is easy to see that if $\lim_{t\rightarrow\infty}u(t)=0$ then $\lim_{t\rightarrow\infty}w(t)=0$, sequently. Hence we only show that the first limit holds. Two cases, $r_{1}< a_{12}$ and $r_{1}=a_{12}$, are considered.
It is easy to see that $v(t)\ge 1$ eventually by comparison principle,
since
\[ \dot v =r_{2}v(1-v)+a_{21}uv\ge r_{2}v(1-v). \]

For $r_{1}< a_{12}$ and $t$ large enough, we have
$$
\frac{\dot{u}}{u}=r_{1}(1-u)-a_{12}v-a_{13}w<r_{1}-a_{12}<0.
$$
Hence we have $u(t)<ce^{(r_{1}-a_{12})t}\to 0$ as $t\rightarrow\infty$.

For $r_{1}=a_{12}$, we obtain that $u(t)$ is decreasing with respect to $t$, and claim that $\lim_{t\to\infty}u(t)=0.$
Suppose to the contrary that $\lim_{t\to\infty}u(t)=\xi>0$. By Markus limiting theorem \cite{Markus1956}, we have
\begin{equation*}
\begin{aligned}
0=\liminf_{t\to\infty}\dot{u}(t)&=\liminf_{t\to\infty}\big(r_{1} u(t)(1-u(t))-r_1u(t)v(t)-a_{13}u(t)w(t)\big)\\
&=r_{1}\xi(1-\xi) -r_1\xi\limsup_{t\to\infty}v(t)-a_{13}\xi\limsup_{t\to\infty}w(t)\\
&=\xi\big(r_{1}(1-\xi) -r_1\limsup_{t\to\infty}v(t)-a_{13}\limsup_{t\to\infty}w(t)\big)=0,
\end{aligned}
\end{equation*}
which implies
\begin{align}\label{ine}
r_1(1-\xi) =r_1\limsup_{t\to\infty}v(t)+a_{13}\limsup_{t\to\infty}w(t).
\end{align}
However, by the second equation of \eqref{3odemodel} and the comparison principle, it is clear that $\liminf_{t\to\infty}v(t)\ge1$. Hence, by \eqref{ine}, we obtain that
\[ a_{13}\limsup_{t\to\infty}w(t)=r_1(1-\xi)- r_1\limsup_{t\to\infty}v(t)\le r_1(1-\xi)- r_1<0\]
which contradicts to the positivity of $w(t)$. Hence we complete the proof.
\end{proof}

\begin{lemma}\label{L-3}
If $a_{31}\leq \mu$, then $\lim_{t\rightarrow\infty}w(t)=0$.
\end{lemma}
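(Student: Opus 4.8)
The plan is to follow the structure of the proof of Lemma~\ref{L-2}, splitting into the two cases $a_{31}<\mu$ and $a_{31}=\mu$. Two a priori estimates will be used: as in the proof of Lemma~\ref{L-1}, the inequality $\dot u\le r_1u(1-u)$ gives $\limsup_{t\to\infty}u(t)\le 1$; and as in the proof of Lemma~\ref{L-2}, the inequality $\dot v\ge r_2v(1-v)$ together with the comparison principle gives $\liminf_{t\to\infty}v(t)\ge 1$. The point is that $\dot w/w=a_{31}u-\mu$, so it suffices to show that $a_{31}u(t)-\mu$ is eventually bounded above by a strictly negative constant; then $w(t)$ decays exponentially to $0$.

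When $a_{31}<\mu$ this is immediate: I would fix $\varepsilon>0$ small enough that $a_{31}(1+\varepsilon)<\mu$, use $\limsup_{t\to\infty}u(t)\le 1$ to pick $T$ with $u(t)<1+\varepsilon$ for $t\ge T$, and conclude $\dot w/w=a_{31}u-\mu\le a_{31}(1+\varepsilon)-\mu<0$ on $[T,\infty)$, so that $w(t)\le w(T)e^{(a_{31}(1+\varepsilon)-\mu)(t-T)}\to 0$.

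The borderline case $a_{31}=\mu$ is the delicate one and I expect it to be the main obstacle: there $\dot w/w=\mu(u-1)$ is only nonpositive once $u$ has dropped below $1$, and the estimate $\limsup_{t\to\infty}u(t)\le 1$ by itself does not force $w\to 0$. To handle it I would sharpen the bound on $u$ using the lower bound on $v$: since $\liminf_{t\to\infty}v(t)\ge 1$, there is $T$ with $v(t)\ge\tfrac12$ for $t\ge T$, hence on $[T,\infty)$
\[
\frac{\dot u}{u}=r_1(1-u)-a_{12}v-a_{13}w\le r_1(1-u)-\tfrac{a_{12}}{2},
\]
and comparison with $\dot z=z\big(r_1(1-z)-\tfrac{a_{12}}{2}\big)$ yields $\limsup_{t\to\infty}u(t)\le\beta:=\max\{1-\tfrac{a_{12}}{2r_1},\,0\}<1$. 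Taking $\eta:=\tfrac{1-\beta}{2}>0$, we get $u(t)\le 1-\eta$ for all large $t$, so $\dot w/w=\mu(u-1)\le-\mu\eta<0$ eventually, and again $w(t)\to 0$. The remaining steps---the two comparison-principle limits and the bookkeeping of constants---are routine, so essentially all of the work is in establishing the strict inequality $\limsup_{t\to\infty}u(t)<1$ needed in the borderline case.
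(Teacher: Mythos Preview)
Your argument is correct and mirrors the paper's case split, but the borderline case $a_{31}=\mu$ is handled by a genuinely different mechanism. The paper argues by contradiction: from $\dot w=\mu w(u-1)\le 0$ one has $w$ eventually nonincreasing; assuming $w\to\xi>0$, a Barbalat-type step gives $\dot w\to 0$, hence $u\to 1$ and $\dot u\to 0$; plugging into the $u$-equation then yields $0=\lim\dot u\le -a_{13}\xi<0$, a contradiction. You instead exploit the $v$-equation directly: the comparison bound $\liminf_{t\to\infty}v\ge 1$ feeds back into $\dot u/u\le r_1(1-u)-a_{12}/2$, forcing $\limsup_{t\to\infty}u\le 1-\tfrac{a_{12}}{2r_1}<1$, after which $w$ decays exponentially. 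Your route is more elementary (no appeal to $\dot w\to 0$) and even produces an explicit decay rate for $w$; the paper's route is shorter and, notably, never touches the $v$-equation, so it would go through verbatim on the $u$--$w$ invariant plane $\{v=0\}$, whereas your version tacitly needs $v(0)>0$ to obtain the lower bound on $v$. Since the paper works with positive solutions throughout (the same assumption is already implicit in the proof of Lemma~\ref{L-2}), this is not a defect, just a difference in scope.
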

\begin{proof}
Similarly, two cases, $a_{31}< \mu$ and $a_{31}=\mu$, are considered. For case $a_{31}<\mu$, we have
$$
\frac{\dot{w}}{w}=a_{31}u-\mu\le a_{31}-\mu<0
$$
holds. Hence $w(t)\to 0$ as $t\rightarrow\infty$.

For case $a_{31}=\mu$,
\[ \dot{w}=w(a_{31}u-\mu)\le w(a_{31}-\mu)=0. \]
Hence $w$ is decreasing, and we claim that $\lim_{t\to\infty}w(t)=0$.
Suppose to the contrary that $\lim_{t\rightarrow\infty}w(t)=\xi>0$. By comparison principle again, we have
\begin{equation*}
\begin{aligned}
0=\liminf_{t\rightarrow\infty}\dot{w}(t)&=\liminf_{t\rightarrow\infty}\big(-\mu w(t)+a_{31}u(t)w(t)\big)\\
&=a_{31}\xi \liminf_{t\rightarrow\infty}u(t)-\mu\xi\\
&=a_{31}\xi\big(\liminf_{t\rightarrow\infty}u(t)-1\big)=0,
\end{aligned}
\end{equation*}
which implies $\lim_{t\to\infty}u(t)=1$ and $\lim_{t\to\infty}\dot{u}(t)=0$. However, by considering the first equation of \eqref{3odemodel}, we obatin
\[ 0=\lim_{t\rightarrow\infty}\dot{u}(t)=\lim_{t\to\infty}\big(r_{1}u(1-u)-a_{12}uv-a_{13}uw\big)\le-a_{13}\xi<0, \]
where this is a contradiction. Hence we have $\lim_{t\rightarrow\infty}w(t)=0$, and the proof is complete.
\end{proof}

Biologically, these two results can be easily interpreted in the biological point of view. From the first equation of \eqref{3odemodel}, species $u$ have two negative effects from $v$ and $w$, respectively. To sustain the negative effect of species $v$, $r_1>a_{12}$, is necessary for survival of $u$ and supporting for $w$ in Lemma \ref{L-2}. Alternatively, for Lemma \ref{L-3}, if the mortality rate $\mu$ of species $w$ is greater than the benefit getting from species $u$, the conversion rate $a_{31}$, then $w$ will die out eventually. Whenever $r_1\le a_{12}$ or $\mu\le a_{31}$ hold, then  system (\ref{3odemodel}) is reduced to a one- or two-dimensional subsystem of \eqref{3odemodel} which is well studied by classical results like Poincare-Bendixson Theorem. Hence we make assumptions (H1) and (H2).

\subsection{Equilibria and Stability in $\mathbb{R}^{3}$}
By straightforward calculation, we obtain that there are one trivial equilibrium $E_{0}=(0,0,0)$, and four semi-trivial equilibria, $E_{1}=(1,0,0)$, $E_{2}=(0,1,0)$, 
\begin{align*}
E_{12}&=(u^*_{12},v^*_{12},0)=\left(\frac{r_{2}(r_{1}-a_{12})}{r_{1}r_{2}+a_{12}a_{21}},\frac{r_{1}r_{2}+r_{1}a_{21}}{r_{1}r_{2}+a_{12}a_{21}},0\right) \quad\text{ and  }\\
E_{13}&=(u^*_{13},0,w^*_{13})=\left(\frac{\mu}{a_{31}},0,\frac{r_{1}(a_{31}-\mu)}{a_{13}a_{31}}\right),
\end{align*}
of system (\ref{3odemodel}). Here $u^*_{12}$ and $v^*_{12}$ satisfy the equations,
\begin{equation}\label{u12}
\left\{
\begin{aligned}
r_{1}(1-u^*_{12})-a_{12}v^*_{12}&=0,\\
r_{2}(1-v^*_{12})+a_{21}u^*_{12}&=0, 
\end{aligned}
\right.
\end{equation}
and $u^*_{13}$ and $w^*_{13}$ have the forms,
\begin{equation}\label{u13}
\begin{aligned}
u^*_{13}=\mu/a_{13} \quad \text{ and} \quad r_{1}(1-u^*_{13})=a_{13}w^*_{13}.
\end{aligned}
\end{equation}

It is obvious that the equilibria, $E_{0}$, $E_{1}$ and $E_{2}$, always exist without any restriction. By contrast, the equilibria $E_{12}$ and $E_{13}$ exist if assumptions (H1) and (H2) hold, respectively. The positive equilibrium
\begin{align}\label{Estar}
E_{*}&=(u_{*},v_{*},w_{*})\nonumber\\
&=\left(\frac{\mu}{a_{31}},1+\frac{a_{21}\mu}{a_{31}r_{2}},\frac{r_{1}r_{2}a_{31}-r_{1}r_{2}\mu-a_{12}a_{31}r_{2}-a_{12}a_{21}\mu}{a_{13}a_{31}r_{2}}\right)
\end{align}
exists if
\begin{enumerate}[{\rm (H3)}]
\item  $r_{1}r_{2}a_{31}-r_{1}r_{2}\mu-a_{12}a_{31}r_{2}-a_{12}a_{21}\mu>0.$
\end{enumerate}
holds. It is hard to see clearly the biological meanings of assumption (H3) because of the complicated form. However, it is easy to see that the inequality $r_1\le a_{12}$ implies that (H3) does not hold, that is, (H3) is a sufficient conditions of (H1). Similarly, assumption (H3) is also a sufficient conditions of (H2). Biologically, there are two key points for the existence of positive equilibrium $E_*$. One is the survival of species $u$ by its $r$-strategy to overcome the negative effect from species $v$, and another one is species $w$ should overcome its mortality by getting benefit from species $u$.\medskip

By direct computations, we have the Jacobian matrix of system (\ref{3odemodel}) given by
\begin{equation}\label{jac}
J=
\begin{bmatrix}
r_{1}-2r_{1}u-a_{12}v-a_{13}w&-a_{12}u&-a_{13}u\\
a_{21}v&r_{2}-2r_{2}v+a_{21}u&0\\
a_{31}w&0&-\mu+a_{31}u
\end{bmatrix}.
\end{equation}
\begin{enumerate}[(i)]
\item It is clear that
\begin{equation*}
J(E_{0})=
\begin{bmatrix}
r_{1}&0&0\\
0&r_{2}&0\\
0&0&-\mu
\end{bmatrix}
\end{equation*}
has two positive eigenvalues and one negative eigenvalue, and $E_0$ is saddle.

\item Evaluating \eqref{jac} at $E_1$ implies that
\begin{equation*}
J(E_{1})=
\begin{bmatrix}
-r_{1}&-a_{12}&-a_{13}\\
0&r_{2}+a_{21}&0\\
0&0&-\mu+a_{31}
\end{bmatrix}
\end{equation*}
has two positive eigenvalues and one negative eigenvalue. Similarly, we can obtain the matrix
 \begin{equation*}
J(E_{2})=
\begin{bmatrix}
r_{1}-a_{12}&0&0\\
a_{21}&-r_{2}&0\\
0&0&-\mu
\end{bmatrix}
\end{equation*}
which is stable if assumption (H1) does not hold. Actually, we can show that, by Markus limiting theorem \cite{Markus1956}, equilibrium $E_2$ is globally asymptotically stable if (H1) does not hold.

\item The Jacobian evaluated at $E_{12}$ is given by
 \begin{equation}\label{JE12}
J(E_{12})=
\begin{bmatrix}
-r_{1}u^*_{12}&-a_{12}u^*_{12}&-a_{13}u^*_{12}\\
a_{21}v^*_{12}&-r_{2}v^*_{12}&0\\
0&0&-\mu+a_{31}u^*_{12}
\end{bmatrix}.
\end{equation}
It is easy to see that there are two eigenvalues, $\lambda_{1}$ and $\lambda_{2}$, corresponding to the upper-left $2\times 2$ submatrix of \eqref{JE12} and one eigenvalue, $\lambda_{3}=-\mu+a_{31}u^*_{12}$, with
\begin{align*}
\lambda_{1}+\lambda_{2}&=-(r_{1}u^*_{12}+r_{2}v^*_{12})<0,\\
\lambda_{1}\lambda_{2}&=(r_{1}r_{2}+a_{12}a_{21})u^*_{12}v^*_{12}>0.
\end{align*}
Thus the matrix has  two negative eigenvalues and one positive eigenvalue if $\mu<a_{31}u^*_{12}$, or three negative eigenvalues if $\mu>a_{31}u^*_{12}$.

\item The Jacobian evaluated at $E^*_{13}$ is given by
\begin{equation}\label{jac2}
J(E^*_{13})=
\begin{bmatrix}
-r_{1}u^*_{13}&-a_{12}u^*_{13}&-a_{13}u^*_{13}\\
0&r_{2}+a_{21}u^*_{13}&0\\
a_{31}w^*_{13}&0&0
\end{bmatrix}.
\end{equation}
Observing the form of the Jacobian matrix \eqref{jac2}, there are one positive eigenvalue, $\lambda_{3}=r_2+a_{21}u^*_{13}$, and  two eigenvalues, $\lambda_1$ and $\lambda_2$, which are obtained by removing the second column and the second row. Although it is obvious that
\begin{align*}
\lambda_{1}+\lambda_{2}&=-r_1u^*_13<0 \quad\text{ and}\\
\lambda_{1}\lambda_{2}&=a_13a_31u^*_13w^*_13>0,
\end{align*}
equilibrium $E_{13}$ is saddle.
\item The Jacobian evaluated at $E_{*}$ is given by
 \begin{equation*}
J(E_{*})=
\begin{bmatrix}
-r_{1}u_{*}&-a_{12}u_{*}&-a_{13}u_{*}\\
a_{21}v_{*}&-r_{2}v_{*}&0\\
a_{31}w_{*}&0&0
\end{bmatrix}.
\end{equation*}
Then the characteristic equation is
\begin{equation*}
\begin{aligned}
\lambda^3+(r_1u_*+r_2v_*)\lambda^2+&(r_1r_2u_*v_*+a_{12}a_{21}u_*v_*+a_{13}a_{31}u_*w_*)\lambda\\&+r_2a_{13}a_{31}u_*v_*w_*=0.
\end{aligned}
\end{equation*}
It is obviously that, by Routh-Hurwitz criterion, the real parts of three roots of the characteristic equation are all negative if and only if
\[ 
(r_1u_*+r_2v_*)(r_1r_2u_*v_*+a_{12}a_{21}u_*v_*+a_{13}a_{31}u_*w_*)>r_2a_{13}a_{31}u_*v_*w_*
\]
which is clearly true. Hence the positive equilibrium $E_*$ is stable whenever it exists.
\end{enumerate}
Let us summarize the above local stability of all equilibria in the following proposition.
\begin{proposition}
\begin{enumerate}[{\rm (i)}]
\item The trivial equilibrium $E_0$ is unstable.
\item The semi-trivial equilibrium $E_{1}$ is unstable.
\item The semi-trivial equilibrium $E_{2}$ is globally asymptotically stable if {\rm (H1)} does not hold.
\item The semi-trivial equilibrium $E_{12}$ is stable if $\mu>a_{31}u^*_{12}$.
\item The semi-trivial equilibrium $E_{13}$ is unstable.
\item The positive equilibrium $E_{*}$ exists and is stable if {\rm (H3)} holds.
\end{enumerate}
\end{proposition}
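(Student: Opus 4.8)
The plan is to evaluate the Jacobian matrix \eqref{jac} at each of the six equilibria and to determine the sign pattern of the real parts of its eigenvalues, reading off almost everything from the block-triangular structure; only the global statement in part~(iii) will require more. Parts~(i), (ii) and~(v) are immediate: each of $J(E_0)$, $J(E_1)$ and $J(E_{13})$ in \eqref{jac2} is block triangular and exhibits an eigenvalue with positive real part --- $J(E_0)$ has $r_1>0$; $J(E_1)$ has $r_2+a_{21}>0$ (and, by {\rm (H2)}, also $-\mu+a_{31}>0$); and $J(E_{13})$ has the eigenvalue $\lambda_3=r_2+a_{21}u^*_{13}>0$ --- so $E_0$, $E_1$ and $E_{13}$ are unstable under no hypotheses beyond the standing ones.

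Next I would dispatch parts~(iv) and~(vi), both hyperbolic local-stability statements. For $E_{12}$ (which exists under {\rm (H1)}), the Jacobian \eqref{JE12} decouples into the scalar eigenvalue $\lambda_3=-\mu+a_{31}u^*_{12}$ together with a $2\times 2$ block of trace $-(r_1u^*_{12}+r_2v^*_{12})<0$ and determinant $(r_1r_2+a_{12}a_{21})u^*_{12}v^*_{12}>0$, so the block contributes two eigenvalues with negative real part; when $\mu>a_{31}u^*_{12}$ the scalar eigenvalue is also negative and $E_{12}$ is asymptotically stable. For the positive equilibrium $E_*$ (which exists under {\rm (H3)}) I would apply the Routh--Hurwitz criterion to the characteristic cubic displayed above: all of its coefficients are positive because $u_*,v_*,w_*>0$, and the remaining Routh--Hurwitz inequality
\[
(r_1u_*+r_2v_*)\bigl(r_1r_2u_*v_*+a_{12}a_{21}u_*v_*+a_{13}a_{31}u_*w_*\bigr)>r_2a_{13}a_{31}u_*v_*w_*
\]
holds because the left-hand side contains the single summand $r_2v_*\cdot a_{13}a_{31}u_*w_*$, which already equals the right-hand side, plus a strictly positive remainder. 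Hence $E_*$ is asymptotically stable whenever it exists.

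It then remains to prove part~(iii), the \emph{global} asymptotic stability of $E_2$ when {\rm (H1)} fails. If $r_1\le a_{12}$, then $J(E_2)$ has eigenvalues $r_1-a_{12}\le 0$, $-r_2$ and $-\mu$, which already settles local stability in the hyperbolic case $r_1<a_{12}$. For the global statement I would invoke Lemma~\ref{L-2}: $r_1\le a_{12}$ forces $u(t)\to 0$ and $w(t)\to 0$, so the $v$-component obeys a scalar equation that is asymptotically autonomous with limiting equation $\dot v=r_2v(1-v)$, whose equilibrium $v=1$ attracts every positive datum. By Markus's limiting theorem \cite{Markus1956} every positive solution then converges to $(0,1,0)=E_2$, and together with Lyapunov stability of $E_2$ this yields global asymptotic stability.

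I expect this last step to be the only real obstacle, and within it the delicate point is the non-hyperbolic boundary case $r_1=a_{12}$: there the eigenvalue $r_1-a_{12}$ vanishes, so linearization is silent about the stability of $E_2$, and one must instead exploit the eventual monotonicity of $u(t)$ recorded in the proof of Lemma~\ref{L-2} (so that $u$ cannot escape) together with the asymptotically-autonomous reduction in order to conclude both attractivity and Lyapunov stability. Everything else in the proposition is routine eigenvalue bookkeeping.
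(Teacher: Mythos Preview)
Your proposal is correct and follows essentially the same approach as the paper: the paper also evaluates the Jacobian at each equilibrium, reads off eigenvalues from the block-triangular structure for (i)--(v), applies Routh--Hurwitz for (vi), and for (iii) simply asserts global stability via Markus's limiting theorem after Lemma~\ref{L-2}. Your treatment is in fact slightly more explicit than the paper's in two places: you spell out why the Routh--Hurwitz inequality holds (the paper just says it is ``clearly true''), and you flag the non-hyperbolic boundary case $r_1=a_{12}$ in part~(iii), which the paper does not discuss beyond its invocation of Markus.
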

Furthermore, we can obtain the following two global results.
\begin{theorem}\label{E12GAS}
Let assumption {\rm (H1)} and $\mu\ge a_{31}u^*_{12}$ hold. Then the positive equilibria $E_{12}$ exists, and it is globally asymptotically stable.
\end{theorem}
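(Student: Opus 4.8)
The plan is to dispose of existence by inspection and then derive global asymptotic stability from a single Volterra-type Lyapunov function together with LaSalle's invariance principle. Existence of $E_{12}$ is immediate: under (H1) we have $r_1-a_{12}>0$, hence $u^*_{12}>0$ and $v^*_{12}>0$, and \eqref{u12} exhibits $(u^*_{12},v^*_{12},0)$ as an equilibrium of \eqref{3odemodel}.

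For the stability part I would use
\[
V(u,v,w)=a_{21}\Bigl(u-u^*_{12}-u^*_{12}\ln\tfrac{u}{u^*_{12}}\Bigr)+a_{12}\Bigl(v-v^*_{12}-v^*_{12}\ln\tfrac{v}{v^*_{12}}\Bigr)+\frac{a_{13}a_{21}}{a_{31}}\,w ,
\]
which is $C^1$ and nonnegative on $\{u>0,\ v>0,\ w\ge0\}$ and vanishes only at $E_{12}$. Differentiating along \eqref{3odemodel} and using the equilibrium identities $r_1(1-u^*_{12})=a_{12}v^*_{12}$ and $r_2(1-v^*_{12})=-a_{21}u^*_{12}$ to rewrite $\dot u/u=-r_1(u-u^*_{12})-a_{12}(v-v^*_{12})-a_{13}w$ and $\dot v/v=-r_2(v-v^*_{12})+a_{21}(u-u^*_{12})$, the weights $a_{21}$ and $a_{12}$ make the $(u-u^*_{12})(v-v^*_{12})$ terms cancel, and the weight $a_{13}a_{21}/a_{31}$ on $w$ makes the $(u-u^*_{12})w$ terms cancel, leaving
\[
\dot V=-a_{21}r_1(u-u^*_{12})^2-a_{12}r_2(v-v^*_{12})^2+\frac{a_{13}a_{21}}{a_{31}}\bigl(a_{31}u^*_{12}-\mu\bigr)\,w .
\]
Since $\mu\ge a_{31}u^*_{12}$, the last term is $\le0$, so $\dot V\le0$ wherever $V$ is defined.

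It then remains to run LaSalle. Orbits are bounded by Lemma~\ref{L-1}, and since $V$ is nonincreasing and tends to $+\infty$ as $u\to0^+$ or $v\to0^+$, every forward orbit stays bounded away from the faces $\{u=0\}$ and $\{v=0\}$; hence orbits are precompact in $\{u>0,v>0,w\ge0\}$ and LaSalle applies. On $\{\dot V=0\}$ one has $u=u^*_{12}$ and $v=v^*_{12}$; if $\mu>a_{31}u^*_{12}$ this already forces $w=0$, and if $\mu=a_{31}u^*_{12}$ one notes that on the largest invariant subset $u\equiv u^*_{12}$ forces $\dot u\equiv0$, i.e.\ $-a_{13}u^*_{12}w\equiv0$, so again $w=0$. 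In either case the largest invariant set in $\{\dot V=0\}$ is $\{E_{12}\}$, so every orbit converges to $E_{12}$; together with the Lyapunov stability of $E_{12}$ that $V$ provides, this is global asymptotic stability.

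I expect the only delicate points — more bookkeeping than genuine difficulty — to be spotting the weighting $(a_{21},a_{12},a_{13}a_{21}/a_{31})$ that simultaneously annihilates the $uv$ and $uw$ cross terms, and handling the borderline case $\mu=a_{31}u^*_{12}$, in which $\dot V$ no longer controls $w$ so that the argument must be closed through the invariance-set computation rather than strict sign-definiteness; it is also worth recording explicitly that $V$ forces $\liminf_{t\to\infty}u(t)>0$ and $\liminf_{t\to\infty}v(t)>0$, which is what makes the LaSalle argument legitimate on the open octant.
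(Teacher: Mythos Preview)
Your proof is correct and follows essentially the same route as the paper: the paper uses the Volterra-type Lyapunov function $V=\frac{a_{21}}{a_{12}}(u-u^*_{12}\ln u)+(v-v^*_{12}\ln v)+\frac{a_{13}a_{21}}{a_{12}a_{31}}w$, which is your $V$ divided by $a_{12}$ (up to an irrelevant additive constant), obtains the same orbital derivative, and closes with the identical LaSalle argument including the two-case analysis for $\mu>a_{31}u^*_{12}$ versus $\mu=a_{31}u^*_{12}$. Your version is in fact slightly more careful, since you explicitly justify precompactness in $\{u>0,\ v>0,\ w\ge0\}$ via the blow-up of $V$ at the coordinate faces, a point the paper leaves implicit.
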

\begin{proof}
Modify the standard Lyapunov function in the following form
\[
V(u(t),v(t),w(t))=\frac{a_{21}}{a_{12}}(u-u^*_{12}\ln u)+v-v^*_{12}\ln v+\frac{a_{13}a_{21}}{a_{12}a_{31}}w.
\]
Then, with equation \eqref{u12},
\begin{align*}
\frac{d}{dt}&V(u(t),v(t),w(t))\\
&=\frac{a_{21}}{a_{12}}(u-u^*_{12})(r_1(1-u)-a_{12}v-a_{13}w)+\\
&\quad (v-v^*_{12})(r_2(1-v)+a_{21}u)+\frac{a_{13}a_{21}}{a_{12}a_{31}}(-\mu w+a_{31}u)\\
&=-\frac{r_1a_{21}}{a_{12}}(u-u^*_{12})^2-r_2(v-v^*_{12})^2+\frac{a_{13}a_{21}}{a_{12}}(\frac{-\mu}{a_{31}}+u^*_{12})w\le 0.
\end{align*}
Hence $\{dV/dt=0\}=\{(u^*_{12}, v^*_{12}, 0)\}$ if $\mu> a_{31}u^*_{12}$, or $\{dV/dt=0\}=\{W\ge 0 : (u^*_{12}, v^*_{12}, W)\}$ if $\mu= a_{31}u^*_{12}$. However, the maximal invariant set of $\{dV/dt=0\}$ is the singleton set $\{(u^*_{12}, v^*_{12}, 0)\}$ for these two possibilities.
By LaSalle's Invariance Principle, we show the global stability of equilibrium $E_{12}$ if $\mu\ge a_{31}u^*_{12}$. The proof is completed.

\end{proof}
\begin{theorem}\label{EstarGAS}
Let assumption {\rm (H3)} hold. Then the positive equilibria $E_{*}$ exists, and it is globally asymptotically stable.
\end{theorem}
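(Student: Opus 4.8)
The plan is to mirror the Lyapunov-function / LaSalle argument used for Theorem~\ref{E12GAS}, but now with a genuinely three-dimensional logarithmic candidate since $w_*>0$. First, observe that (H3) is exactly the statement that the numerator of the third component in \eqref{Estar} is positive, so $E_*=(u_*,v_*,w_*)$ has all three coordinates positive and hence exists; recall also that, as noted after (H3), (H3) is a sufficient condition for both (H1) and (H2). Since the coordinate planes are invariant, any orbit issuing from the open positive octant $\{u,v,w>0\}$ remains there, and by Lemma~\ref{L-1} it is bounded.

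Next I would set
\[
V(u,v,w)=\frac{a_{21}}{a_{12}}\bigl(u-u_*\ln u\bigr)+\bigl(v-v_*\ln v\bigr)+\frac{a_{13}a_{21}}{a_{12}a_{31}}\bigl(w-w_*\ln w\bigr).
\]
This $V$ is $C^1$ on the open octant, bounded below, and blows up both as the orbit approaches $\{u=0\}\cup\{v=0\}\cup\{w=0\}$ and as $|(u,v,w)|\to\infty$, so its sublevel sets are compact; together with Lemma~\ref{L-1} this confines every positive orbit to a compact subset of the open octant. Using the equilibrium identities $r_1(1-u_*)=a_{12}v_*+a_{13}w_*$, $r_2(1-v_*)=-a_{21}u_*$ and $a_{31}u_*=\mu$, one rewrites $\dot u/u$, $\dot v/v$, $\dot w/w$ in terms of the deviations $u-u_*$, $v-v_*$, $w-w_*$; the weights $a_{21}/a_{12}$, $1$, $a_{13}a_{21}/(a_{12}a_{31})$ are chosen precisely so that the sign-indefinite cross terms $(u-u_*)(v-v_*)$ and $(u-u_*)(w-w_*)$ cancel, leaving
\[
\frac{d}{dt}V=-\frac{r_1a_{21}}{a_{12}}(u-u_*)^2-r_2(v-v_*)^2\le 0.
\]

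Finally I would invoke LaSalle's invariance principle. The set $\{\,dV/dt=0\,\}$ is the half-line $\{(u_*,v_*,w):w>0\}$; if an orbit stays in this set then $u\equiv u_*$ forces $\dot u\equiv 0$, and substituting into the first equation of \eqref{3odemodel} together with $r_1(1-u_*)-a_{12}v_*=a_{13}w_*$ yields $w\equiv w_*$. Hence the largest invariant subset of $\{\,dV/dt=0\,\}$ is the singleton $\{E_*\}$, so every positive orbit converges to $E_*$; combined with the local asymptotic stability of $E_*$ recorded in the preceding Proposition, this gives global asymptotic stability. The one genuine subtlety is that $\dot V$ is only negative \emph{semi}-definite — it is blind to the $w$-direction — so the argument really does need the invariance-principle step, exactly as in Theorem~\ref{E12GAS}; the rest is the routine completion-of-squares bookkeeping.
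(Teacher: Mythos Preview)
Your argument is correct and is essentially identical to the paper's own proof: the same weighted logarithmic Lyapunov function, the same cancellation of the cross terms to obtain $\dot V=-\frac{r_1a_{21}}{a_{12}}(u-u_*)^2-r_2(v-v_*)^2$, and the same LaSalle step identifying the maximal invariant subset of $\{\dot V=0\}$ as $\{E_*\}$. (In fact your coefficient $a_{21}/a_{12}$ on the $u$-block is the one that actually makes the cancellations work and matches the paper's final line; the $a_{12}/a_{21}$ appearing in the paper's displayed definition of $V$ is a typo.)
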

\begin{proof}
Define the Lyapunov function in the following form
\[
V(u(t),v(t),w(t))=\frac{a_{12}}{a_{21}}(u-u_{*}\ln u)+v-v_{*}\ln v+\frac{a_{13}a_{21}}{a_{12}a_{31}}(w-w_{*}\ln w).
\]
Then 
\begin{equation*}
\begin{aligned}
\frac{d}{dt}&V(u(t),v(t),w(t))\\
=&\frac{a_{12}}{a_{21}}(u-u_{*})(r_{1}(1-u)-a_{12}v-a_{13}w)+(v-v_{*})(r_{2}(1-v)+a_{21}u)\\
&+\frac{a_{13}a_{21}}{a_{12}a_{31}}(w-w_{*})(-\mu+a_{31}u)\\
=&\frac{a_{12}}{a_{21}}(u-u_{*})(r_{1}(u_{*}-u)+a_{12}(v_{*}-v)+a_{13}(w_{*}-w))\\
&+(v-v_{*})(r_{2}(v_{*}-v)+a_{21}(u-u_{*}))+\frac{a_{13}a_{21}}{a_{12}a_{31}}(w-w_{*})(a_{31}(u-u_{*}))\\
=&-\frac{r_{1}a_{21}}{a_{12}}(u-u_{*})^{2}-r_{2}(v-v_{*})^{2}\leq0.
\end{aligned}
\end{equation*}
Hence by the LaSalle's Invariance Principle, the $\omega$-limit set of any solution of (\ref{3odemodel}) is contained in the maximal invariant subset of $\{dV/dt=0\}=\{(u_{*}, v_{*},W): W>0\}$, which is the singleton $\{(u_{*}, v_{*},w_{*})\}$. We complete the proof.
\end{proof}
%%%%%%%%%%%%%%%%%%%%%%%%%%%%%%%%%%
\section{Existence of traveling wave solutions}

In this section, motivated by Huang \cite{Huang2012, Huang2016}, the high dimensional shooting method is implemented to investigate the existence of wave fronts, or traveling wave solutions of \eqref{3pdemodel} from $E_1$ to $E_*$. Following the ideas of Huang, we list the main steps as follows.
\begin{enumerate}[(i)]
\item By using the moving coordinates, the reaction-diffusion system is transformed into a ODE system.
% and the existence of traveling wave solutions connecting two different equilibria is equivalent to a heteroclinic orbit of the corresponding ODE system. 
\item Construct a variant of Wazewski set $\Sigma$ with $E_1$ as its boundary point and also containing $E_*$. Dynamics of the system on all boundaries of $\Sigma$ should be clarified. 

\item Analyze the structure of unstable manifold of $E_1$, and pick up a curve contained in the unstable manifold of $E_1$ with two end points on the ``exit set'' of boundary of $\Sigma$. 
%It is clear that all solutions with initial conditions on this curve will attend to the unstable equilibrium as $t\to-\infty$. 
\item Show that there exists a particular point on this curve, and the solution starting from this point will stay in the interior of $\Sigma$ for all $t\ge 0$.
%First, we discuss the stability of all boundary equilibrium and the global asymptotically stable of the positive equilibria of system (\ref{3odemodel}). Secondly, we find a heteroclinic orbit of the system (\ref{3odemodel}) connecting equilibria $E_{1}$ to $E_{*}$.
\item Define a nonempty subset of $\Sigma$ which contains the point stayed in $\Sigma$ for all positive time under the action of the ODE system. Then by constructing a Lyapunov function and using the LaSalle's invariance principle, we obtain the main result.
\end{enumerate}

\subsection{The ODE forms and the Lienard Transformation}
We consider the solution of \eqref{3pdemodel} with the moving coordinate $\xi$ and wave speed $c$ of the form
\begin{equation}\label{profile}
\left\{
\begin{aligned}
u(x, t) = U(x  + ct)=U(\xi),\\
v(x, t) = V (x  + ct)=V(\xi),\\
w(x, t) = W (x  + ct)=W(\xi),
\end{aligned}
\right.
\end{equation}
satisfying the asymptotical boundary conditions from $E_1$ to $E_*$, that is,
\begin{equation}\label{AB}
\begin{aligned}
\lim_{\xi\to -\infty}\big(U(\xi), V(\xi), W(\xi)\big)&=(1, 0, 0), \quad\text{ and }\\
\lim_{\xi\to \infty}\big(U(\xi), V(\xi), W(\xi)\big)&=(u_*, v_*, w_*).
\end{aligned}
\end{equation}

 A direct computation shows that $(U(\xi),V(\xi), W(\xi))$ is a traveling wave solution of \eqref{3pdemodel} if and only if $(U (\xi ), V (\xi ), W(\xi))$ is a solution of the system,
\begin{equation}\label{profile}
\left\{
\begin{aligned}
cU'&=U\left(r_{1}(1-U)-a_{12}V-a_{13}W\right),\\
cV'& =V(r_{2}(1-V)+a_{21}U),\\
cW'& =dW''+W(-\mu+a_{31}U).
\end{aligned}
\right.
\end{equation}
For a constant $c > 0$ we consider the Lienard transformation to make the following changes of variables and scaling:
\begin{equation*}
\begin{aligned}
X_{1}(t) & =U(ct), \\
X_{2}(t) & =V(ct),\\
Y(t) & =W(ct),\\
Z(t) & = \frac{1}{c}[cW(ct)-d W'(ct)].
\end{aligned}
\end{equation*}
Then, upon straightforward computation, \eqref{profile} is transformed to a four dimensional system,
\begin{equation}\label{profile-2}
\begin{aligned}
\dot X_{1} &= X_{1}\left(r_{1}(1-X_{1})-a_{12}X_{2}-a_{13}Y\right),\\
\dot X_{2} &= X_{2}(r_{2}(1-X_{2})+a_{21}X_{1}), \\
\dot Y &=  \rho[Y-Z],\ \ \ \ \rho=\frac{c^2}{d},\\
\dot Z &= Y(-\mu+a_{31}X_{1}).
\end{aligned}
\end{equation}
It is clear that $(U(\xi),V(\xi),W(\xi))$ is a nonnegative solution of (\ref{profile}) with asymptotical boundary conditions \eqref{AB} if and only if $\left(X_{1}(t),X_{2}(t),Y(t),Z(t)\right)$ is a nonnegative solution of (\ref{profile-2}) satisfying the asymptotical boundary conditions,
$$
\left(X_{1}(-\infty),X_{2}(-\infty),Y(-\infty),Z(-\infty)\right)=(1,0,0,0),
$$
$$
\left(X_{1}(\infty),X_{2}(\infty),Y(\infty),Z(\infty)\right)=(u_{*},v_{*},w_{*},w_{*}).
$$

\subsection{The Wazewski set and its Exist Subset}
Let $\sigma_1$ and $\sigma_2$ be constants defined by
$$
\sigma_{1}=\frac{\rho+\sqrt{\rho^{2}-4\rho (a_{31}-\mu)}}{2\rho}=\frac{c+\sqrt{c^2-4d(a_{31}-\mu)}}{2c}
$$
and
$$
\sigma_{2}=\frac{\rho+\sqrt{\rho^{2}+4\rho \mu}}{2\rho}.
$$
It is clear that $0<\sigma_{1}<1$ and $\sigma_{2}>1$ if $c> c_*\equiv 2\sqrt{d(a_{31}-\mu)}$ or, equivalently, $\rho> 4(a_{31}-\mu)$. For $c> c_*$, we define a wedged like region $\Sigma \subset \mathbb{R}^{4}$ as follows, 
$$
\Sigma=\{(X_{1}, X_{2}, Y, Z): 0\leq X_{1}\leq 1,0\leq X_{2}\leq 1+\frac{a_{21}}{r_{2}}, Y\geq 0, \sigma_{1}Y\leq Z\leq \sigma_{2} Y\}.
$$
Then the boundary of $\Sigma$ consists of surfaces $P_{1}$, $P_{2}$, $Q_{1}\sim Q_{5}$ represented by
\begin{equation*}%\label{profile-3}
\begin{aligned}
Q_{1}&=\{X_{1}=0, 0\leq X_{2}\leq 1+\frac{a_{21}}{r_{2}}, Y\geq 0, \sigma_{1}Y\leq Z\leq \sigma_{2} Y\},\\
Q_{2}&=\{X_{1}=1, 0\leq X_{2}\leq 1+\frac{a_{21}}{r_{2}}, Y\geq 0, \sigma_{1}Y\leq Z\leq \sigma_{2} Y\},\\
Q_{3}&=\{0\leq X_{1}\leq 1,X_{2}=0, Y\geq 0, \sigma_{1}Y\leq Z\leq \sigma_{2} Y\},\\
Q_{4}&=\{0\leq X_{1}\leq 1,X_{2}=1+\frac{a_{21}}{r_{2}}, Y\geq 0, \sigma_{1}Y\leq Z\leq \sigma_{2} Y\},\\
Q_{5}&=\{0< X_{1}< 1,0< X_{2}<1+\frac{a_{21}}{r_{2}}, Y=Z=0\},\\
P_{1}&=\{0<X_{1}<1, 0<X_{2}<1+\frac{a_{21}}{r_{2}}, Y>0,Z=\sigma_{1}Y\},\\
P_{2}&=\{0<X_{1}<1, 0<X_{2}<1+\frac{a_{21}}{r_{2}}, Y>0,Z=\sigma_{2}Y\}.\\
\end{aligned}
\end{equation*}
The vector field of (\ref{profile-2}) has a very simple property in the boundary of $\Sigma$, which can be characterized by the following two lemmas.
\begin{lemma}\label{L-4}
Let $c> c_*$ and $\Phi_{t}(p)$ be the flow of (\ref{profile-2}), i.e. $\Phi_{t}(p)$ is a solution of (\ref{profile-2}) satisfying the initial condition $\Phi_{0}(p)=p\in \mathbb{R}^{4}$. Then for any $p\in Int(\Sigma)$, $\Phi_{t}(p)$ cannot leave $\Sigma$ from a point in the boundary $\cup_{i=1}^{5}Q_{i}$ of $\Sigma$ at any positive time.
\end{lemma}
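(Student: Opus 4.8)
The plan is to show that every trajectory issuing from $Int(\Sigma)$ keeps the five scalar inequalities that carve out the faces $Q_1,\dots,Q_5$ strictly satisfied for as long as it remains in $\Sigma$; since a point of $\cup_{i=1}^{5}Q_i$ is precisely a point of $\Sigma$ at which one of these inequalities degenerates to an equality, this forbids the flow from sitting on $\cup_{i=1}^{5}Q_i$ after leaving the interior, which is the assertion. Concretely, I would fix $p\in Int(\Sigma)$ and let $[0,T)$ be the largest interval on which $\Phi_t(p)$ is defined and contained in $\Sigma$; every estimate below is a differential inequality holding pointwise along the orbit while it lies in $\Sigma$, so working on $[0,T)$ sidesteps the fact that $\Sigma$ is unbounded in the $Y$ and $Z$ directions.

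The faces $Q_1=\{X_1=0,\dots\}$ and $Q_3=\{X_2=0,\dots\}$ are excluded immediately: the first two equations of \eqref{profile-2} have the form $\dot X_1=X_1 g_1$ and $\dot X_2=X_2 g_2$ with $g_1,g_2$ continuous along the orbit, so $X_1(t)=X_1(0)\exp\!\big(\int_0^{t}g_1\big)>0$ and likewise $X_2(t)>0$ for every $t\in[0,T)$, whereas $p\in Int(\Sigma)$ gives $X_1(0),X_2(0)>0$. For the face $Q_5=\{Y=Z=0,\dots\}$ I would use the defining constraint $Z\le\sigma_2 Y$ of $\Sigma$: since $\sigma_2>1$, along the orbit $\dot Y=\rho(Y-Z)\ge\rho(1-\sigma_2)Y$, and integrating gives $Y(t)\ge Y(0)\,e^{\rho(1-\sigma_2)t}>0$ on $[0,T)$; as $p\in Int(\Sigma)$ forces $Y(0)>0$, the orbit never meets $\{Y=0\}\supset Q_5$.

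For $Q_2=\{X_1=1,\dots\}$ and $Q_4=\{X_2=1+a_{21}/r_2,\dots\}$ I would run a first-contact argument. On $\{X_1=1\}$ the first equation gives $\dot X_1=-(a_{12}X_2+a_{13}Y)<0$, using $X_2>0$ from the previous step; hence if $X_1$ reached the value $1$ at a first time $\tau\in[0,T)$ we would need $\dot X_1(\tau)\ge0$, a contradiction, so $X_1(t)<1$ throughout $[0,T)$ and $Q_2$ is avoided. Knowing $X_1<1$, a short computation on $\{X_2=1+a_{21}/r_2\}$ gives $\dot X_2=X_2\,a_{21}(X_1-1)<0$, and the same first-contact argument yields $X_2(t)<1+a_{21}/r_2$ on $[0,T)$, so $Q_4$ is avoided as well.

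The remaining issues are bookkeeping rather than genuine obstacles: one must phrase everything on the in-$\Sigma$ interval $[0,T)$ instead of invoking a global flow, precisely because $\Sigma$ is unbounded; and the edges and corners where two faces $Q_i$ meet need no separate treatment, since violating any single one of the five strict inequalities is already impossible. The one step that is easy to overlook is the one for $Q_5$, where the bound keeping $Y$ away from $0$ is not self-contained but relies on the upper constraint $Z\le\sigma_2 Y$ that defines $\Sigma$; once this is noticed, the argument is routine, and the companion statement for the faces $P_1,P_2$ will be handled separately.
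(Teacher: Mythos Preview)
Your argument is correct and follows essentially the same route as the paper: invariance (or, equivalently, positivity preservation via the multiplicative structure) for $Q_1$, $Q_3$, $Q_5$, and an inward-pointing vector field computation for $Q_2$ and $Q_4$. The one minor variation is that the paper disposes of $Q_5$ in one line by noting that $\{Y=Z=0\}$ is an invariant set (hence unreachable by uniqueness), whereas you derive $Y(t)>0$ from the differential inequality $\dot Y\ge\rho(1-\sigma_2)Y$ coming from the wedge constraint $Z\le\sigma_2 Y$; both are valid and your version is arguably more self-contained.
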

\begin{proof}
It is obvious that the boundary $Q_{1}$, $Q_{3}$ and $Q_{5}$ are invariant sets of (\ref{profile-2}). Hence any solution of (\ref{profile-2}) through a point in the interior of $\Sigma$ can not leave $\Sigma$ from a point in the set $Q_{1}\cup Q_{3}\cup Q_{5}$.

First suppose $p_{0}=(1, X_2(0), Y(0), Z(0))\in Q_{2}$, then the first equation of (\ref{profile-2}) yields that at $p_{0}$,
$$
\dot{X}_{1}(0)=-a_{12}X_{2}(0)-a_{13}Y(0)<0.
$$
So that the vector field of (\ref{profile-2}) points interior of $\Sigma$ and hence $\Phi_{t}(p_0)$ cannot exit $\Sigma$ at $p_{0}$ in the set $Q_{2}$.

Next suppose $p_{1}=(X_1(0), 1+\frac{a_{21}}{r_2}, Y(0), Z(0))\in Q_{4}$, then the second equation of (\ref{profile-2}) yields that at $p_{1}$,
\begin{equation*}
\begin{aligned}
\dot{X}_{2}(0)&=\left(1+\frac{a_{21}}{r_{2}}\right)\left[r_{2}\left(1-\left(1+\frac{a_{21}}{r_{2}}\right)\right)+a_{21}X_{1}(0)\right]\\
&=-a_{21}-\frac{a_{21}^{2}}{r_{2}}+\left(a_{21}+\frac{a_{21}^{2}}{r_{2}}\right)X_{1}(0)\\
&<-a_{21}-\frac{a_{21}^{2}}{r_{2}}+a_{21}+\frac{a_{21}^{2}}{r_{2}}=0.
\end{aligned}
\end{equation*}
So that the vector field of (\ref{profile-2}) points interior of $\Sigma$ and hence $\Phi_{t}(p_1)$ cannot
exit $\Sigma$ at $p_{1}$ in the set $Q_{4}$.
\end{proof}
Next let us study the vector field at the sets $P_{1}$ and $P_{2}$.
\begin{lemma}\label{L-5}
Let $c> c_*$, where $c$ is the wave speed in the system (\ref{profile-2}). Then the vector field of (\ref{profile-2}) at $p \in P_{1}\cup P_{2}$ points outside of $\Sigma$.
\end{lemma}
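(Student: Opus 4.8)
The plan is to check directly that along each of the two "diagonal" faces $P_1=\{Z=\sigma_1 Y\}$ and $P_2=\{Z=\sigma_2 Y\}$ the flow of \eqref{profile-2} crosses transversally out of $\Sigma$. Since $\Sigma$ is cut out near $P_i$ by the single inequality $\sigma_1 Y\le Z\le\sigma_2 Y$ (the other four inequalities being strict on $P_1\cup P_2$), the relevant quantity is the sign of $\frac{d}{dt}(Z-\sigma_1 Y)$ on $P_1$ and of $\frac{d}{dt}(\sigma_2 Y-Z)$ on $P_2$. Using $\dot Y=\rho(Y-Z)$ and $\dot Z=Y(-\mu+a_{31}X_1)$, on $P_1$ one computes
\[
\frac{d}{dt}\big(Z-\sigma_1 Y\big)\Big|_{Z=\sigma_1 Y}
= Y\big(-\mu+a_{31}X_1\big)-\sigma_1\rho\,(1-\sigma_1)\,Y
= Y\Big[a_{31}X_1-\mu-\sigma_1\rho(1-\sigma_1)\Big].
\]
By the definition of $\sigma_1$ as a root of $\rho\sigma^2-\rho\sigma+(a_{31}-\mu)=0$, we have $\sigma_1\rho(1-\sigma_1)=a_{31}-\mu$, so the bracket collapses to $a_{31}X_1-\mu-(a_{31}-\mu)=a_{31}(X_1-1)$. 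Since $0<X_1<1$ on $P_1$ and $Y>0$ there, this derivative is strictly negative, i.e. $Z-\sigma_1 Y$ decreases through $0$, so the trajectory leaves $\Sigma$ across $P_1$.

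The same computation on $P_2$: $\frac{d}{dt}(\sigma_2 Y-Z)\big|_{Z=\sigma_2 Y}=\sigma_2\rho(1-\sigma_2)Y-Y(-\mu+a_{31}X_1)=Y[\sigma_2\rho(1-\sigma_2)+\mu-a_{31}X_1]$. Now $\sigma_2$ is the positive root of $\rho\sigma^2-\rho\sigma-\mu=0$, so $\sigma_2\rho(1-\sigma_2)=-\mu$, and the bracket becomes $-\mu+\mu-a_{31}X_1=-a_{31}X_1<0$ (again using $X_1>0$, $Y>0$ on $P_2$). Hence $\sigma_2 Y-Z$ decreases through $0$ and the trajectory exits $\Sigma$ across $P_2$. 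Note this is exactly where the hypothesis $c>c_*$ enters: it is needed so that $\sigma_1$ is real and lies in $(0,1)$ and $\sigma_2>1$, which makes $\Sigma$ a genuine nonempty wedge and makes the two faces distinct; the sign computations themselves are then automatic.

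I do not anticipate a serious obstacle here — the lemma is a routine transversality check and the only mild subtlety is bookkeeping: one must confirm that on $P_1\cup P_2$ the constraints defining $Q_1,\dots,Q_5$ are all strict, so that the outward-pointing direction is governed solely by the single functional $Z-\sigma_i Y$ and no corner/edge behavior interferes. The algebraic miracle that the brackets reduce to $\pm a_{31}X_1$ (or $a_{31}(X_1-1)$) is forced by the defining quadratics for $\sigma_1,\sigma_2$, so once those identities are recorded the rest is immediate.
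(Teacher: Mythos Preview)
Your proposal is correct and follows essentially the same approach as the paper: a direct transversality check on the two faces $P_1$ and $P_2$ using the defining quadratic identities $\rho\sigma_1(1-\sigma_1)=a_{31}-\mu$ and $\rho\sigma_2(1-\sigma_2)=-\mu$. The only cosmetic difference is that the paper phrases the computation as a slope comparison in the $(Y,Z)$-plane (showing $\dot Z/\dot Y<\sigma_i$ and tracking the sign of $\dot Y$), whereas you compute the time derivative of the linear functional $Z-\sigma_i Y$ directly; the underlying algebra and the use of $0<X_1<1$, $Y>0$ are identical.
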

\begin{proof}
First consider a point $p_{1}=(X_1(0), X_2(0), Y(0), Z(0))\in P_{1}$. Then $Z(0)=\sigma_{1}Y(0)<Y(0)$ and $Y(0)>0$. By the last two equations of (\ref{profile-2}), we obtain
$$
\dot{Y}(0)=\rho[Y(0)-Z(0)]>0,
$$
\begin{equation*}
\begin{aligned}
\frac{\dot{Z}(0)}{\dot{Y}(0)}&=\frac{Y(0)(-\mu+a_{31}X_{1}(0))}{\rho[Y(0)-Z(0)]}\\
&=\frac{-\mu+a_{31}X_{1}(0)}{\rho(1-\sigma_{1})}\\
&<\frac{a_{31}-\mu}{\rho(1-\sigma_{1})}=\sigma_{1}.\\
\end{aligned}
\end{equation*}
It implies that the vector field at the point $p_{1}\in P_{1}$ points outside of $\Sigma$. Next, let $p_{2}=(X_1(0), X_2(0), Y(0), Z(0))\in P_{2}$. Then $Z(0)=\sigma_{2}Y(0)>Y(0)$ and $Y(0)>0$. At $p_{2}$, we have
$$
\dot{Y}(0)=\rho[Y(0)-Z(0)]<0,
$$
\begin{equation*}
\begin{aligned}
\frac{\dot{Z}(0)}{\dot{Y}(0)}&=\frac{Y(0)(-\mu+a_{31}X_{1}(0))}{\rho[Y(0)-Z(0)]}\\
&=\frac{-a_{31}X_{1}(0)+\mu}{\rho(\sigma_{2}-1)}\\
&<\frac{\mu}{\rho(\sigma_{2}-1)}=\sigma_{2}.
\end{aligned}
\end{equation*}
It implies that the vector field at the point $p_{2}\in P_{2}$ points outside of $\Sigma$.
\end{proof}
By Lemma \ref{L-4} and Lemma \ref{L-5}, we can see that with initial point $p\in\Sigma$ $\Phi_{t}(p)$ can only leave $\Sigma$ from a point in $P_{1}\cup P_{2}$ at some positive time. 

\subsection{The Unstable Manifold of $E_1$}
Now we turn to study the unstable manifold of the equilibrium $E_{1}$. The linearized system of (\ref{profile-2}) at $E_{1}$ is
\begin{equation}\label{linearized}
\begin{aligned}
\dot{X_{1}}&=-r_{1}X_{1}-a_{12}X_{2}-a_{13}Y,\\
\dot{X_{2}}&=(r_{2}+a_{21})X_{2},\\
\dot{Y}&=\rho[Y-Z]\\
\dot{Z}&=(-\mu+a_{31}) Y,
\end{aligned}
\end{equation}
with Jacobian matrix of (\ref{profile-2}) at $E_{1}$,
 \begin{equation*}
\begin{bmatrix}
-r_{1}&-a_{12}&-a_{13}&0\\
0&r_{2}+a_{21}&0&0\\
0&0&\rho&-\rho\\
0&0&-\mu+a_{31}&0
\end{bmatrix}.
\end{equation*}
For $c> c_*$, upon a direct computation, the matrix has one negative eigenvalue
$$
\lambda_{0}=-r_{1}
$$
and three distinct positive eigenvalues
\begin{equation}\label{eigenvalue}
\begin{aligned}
\lambda_{1}&=r_{2}+a_{21},\\
\lambda_{2}&=\frac{\rho+\sqrt{\rho^{2}-4\rho(a_{31}-\mu)}}{2},\\
\lambda_{3}&=\frac{\rho-\sqrt{\rho^{2}-4\rho(a_{31}-\mu)}}{2},
\end{aligned}
\end{equation}
where the corresponding eigenvectors to $\lambda_{1}$, $\lambda_{2}$ and $\lambda_{3}$ are
\begin{equation}\label{eigenvectors}
\begin{aligned}
h_{1}&=\left[-\frac{a_{12}}{\lambda_{1}+r_{1}},1,0,0\right]^{T},\\
h_{2}&=\left[-\frac{a_{13}}{\lambda_{2}+r_{1}},0,1,\frac{a_{31}-\mu}{\lambda_{2}}\right]^{T},\\
h_{3}&=\left[-\frac{a_{13}}{\lambda_{3}+r_{1}},0,1,\frac{a_{31}-\mu}{\lambda_{3}}\right]^{T}.
\end{aligned}
\end{equation}
Hence, the unstable manifold $\varepsilon_{1}^{u}$ of $E_{1}$ is tangent to the plane
$$
P=\{k_{1}h_{1}+k_{2}h_{2}+k_{3}h_{3}+E_{1}:k_{1},k_{2},k_{3}\in\mathbb{R}\}=\{x_{1},x_{2},y,z\}
$$
where
\begin{equation*}
\begin{aligned}
x_{1}&=1-\frac{k_{1}a_{12}}{\lambda_{1}+r_{1}}-\frac{k_{2}a_{13}}{\lambda_{2}+r_{1}}-\frac{k_{3}a_{13}}{\lambda_{3}+r_{1}},\\
x_{2}&=k_{1},\\
y&=k_{2}+k_{3},\\
z&=\frac{k_{2}(a_{31}-\mu)}{\lambda_{2}}+\frac{k_{3}(a_{31}-\mu)}{\lambda_{3}}.
\end{aligned}
\end{equation*}
\begin{lemma}\label{L-6}
There are two points $p_{1}^{*}\in P_{1}$ and $p_{2}^{*}\in P_{2}$ and a curve $\gamma\in \varepsilon_{1}^{u}$ which connects $p_{1}^{*}$ and $p_{2}^{*}$ such that
$$
\gamma\setminus\{p_{1}^{*}, p_{2}^{*}\}\subset {\rm Int}(\Sigma).
$$
\end{lemma}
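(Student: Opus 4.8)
The plan is to build the curve $\gamma$ first at the linearized level, inside the plane $P$ tangent to $\varepsilon_1^u$ at $E_1$, and then transport it onto the genuine local unstable manifold by a continuity argument. Throughout, the decisive structural fact is that the hypothesis $c>c_*$ forces $\sigma_1=\lambda_2/\rho\in(\tfrac12,1)$ and $\sigma_2>1$; the strict inequality $\sigma_1>\tfrac12$ is precisely what makes the construction possible.

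First I would set up coordinates on $P$. Using the parametrization by $(k_1,k_2,k_3)$ already displayed, together with $\lambda_2+\lambda_3=\rho$ and $\lambda_2\lambda_3=\rho(a_{31}-\mu)$ (so that $(a_{31}-\mu)/\lambda_2=1-\sigma_1$ and $(a_{31}-\mu)/\lambda_3=\sigma_1$), a short computation shows that a point $(x_1,x_2,y,z)\in P$ satisfies
$$
z-\sigma_1 y=(1-2\sigma_1)\,k_2,\qquad \sigma_2 y-z=(\sigma_2-1+\sigma_1)\,k_2+(\sigma_2-\sigma_1)\,k_3,
$$
as well as $x_2=k_1$, $y=k_2+k_3$ and $1-x_1=\tfrac{a_{12}}{\lambda_1+r_1}k_1+\tfrac{a_{13}}{\lambda_2+r_1}k_2+\tfrac{a_{13}}{\lambda_3+r_1}k_3$. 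From these one sees that the inward directions at $E_1$ form an open cone $\{k_1>0,\ k_2<0,\ k_3>|k_2|\tfrac{\sigma_2-1+\sigma_1}{\sigma_2-\sigma_1}\}$ whose two boundary faces $\{k_2=0\}$ and $\{k_2=-k_3\tfrac{\sigma_2-\sigma_1}{\sigma_2-1+\sigma_1}\}$ point towards $P_1$ and $P_2$ respectively; this is the geometric picture behind the construction, and each assertion is an elementary sign check once $\sigma_1>\tfrac12$ is invoked.

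Next I would fix a small straight segment in $P$ that threads this cone. For small $\varepsilon>0$, take $\beta(s)=E_1+k_1^0h_1+(k_2^0-s\kappa)h_2+k_3^0h_3$ for $s\in[0,1]$, with $k_1^0,k_2^0,k_3^0>0$ all of order $\varepsilon$ and $\kappa$ chosen in the interval $\bigl(k_2^0+k_3^0\tfrac{\sigma_2-\sigma_1}{\sigma_2-1+\sigma_1},\ k_2^0+k_3^0\bigr)$, which is nonempty exactly because $\sigma_1>\tfrac12$. Substituting into the formulas above shows: along $\beta$ the quantity $z-\sigma_1 y$ is strictly increasing and changes sign from $-$ to $+$; $\sigma_2 y-z$ is strictly decreasing and changes sign from $+$ to $-$; $y$ is linear and positive at both endpoints, hence positive on $[0,1]$; $x_2=k_1^0\in(0,1+a_{21}/r_2)$; and $0<x_1<1$ thanks to the estimate on $1-x_1$, where $\lambda_3<\lambda_2$ is used to keep $1-x_1>0$ at $s=1$. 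Hence $\beta(0)$ lies strictly outside $\Sigma$ across $P_1$, $\beta(1)$ strictly outside across $P_2$, and the intermediate portion of $\beta$ lies in ${\rm Int}(\Sigma)$.

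Finally I would transport $\beta$ to $\varepsilon_1^u$. Since $E_1$ is hyperbolic and the vector field is smooth, near $E_1$ the local unstable manifold is a smooth graph $\Psi$ over $P$ with $\Psi(E_1)=E_1$ and $D\Psi(E_1)={\rm id}$; set $\gamma_0(s)=\Psi(\beta(s))\subset\varepsilon_1^u$. Because $z-\sigma_1 y$ and $\sigma_2 y-z$ are linear and $\Psi$ is $C^1$-close to the identity for $\varepsilon$ small, the functions $G_1(s):=z(\gamma_0(s))-\sigma_1 y(\gamma_0(s))$ and $G_2(s):=\sigma_2 y(\gamma_0(s))-z(\gamma_0(s))$ remain strictly monotone with the sign changes $-\to+$ and $+\to-$, while $y(\gamma_0)>0$, $0<x_1(\gamma_0)<1$ and $0<x_2(\gamma_0)<1+a_{21}/r_2$ persist on $[0,1]$. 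Let $s_1,s_2\in(0,1)$ be the unique zeros of $G_1,G_2$; at $s_1$ one has $z=\sigma_1 y$, so $G_2(s_1)=(\sigma_2-\sigma_1)\,y(\gamma_0(s_1))>0$, which forces $s_1<s_2$. Then $p_1^*:=\gamma_0(s_1)\in P_1$, $p_2^*:=\gamma_0(s_2)\in P_2$, and for every $s\in(s_1,s_2)$ all five strict inequalities defining ${\rm Int}(\Sigma)$ hold, so $\gamma:=\gamma_0|_{[s_1,s_2]}$ is the desired curve. I expect this last step to be the only genuinely delicate one: a literal push-forward $\Psi\circ\beta$ need not have its endpoints on $P_1$ and $P_2$, so one must build in strict monotonicity of $G_1$ and $G_2$ and let the intermediate value theorem pin the endpoints down and simultaneously confine the open sub-arc to ${\rm Int}(\Sigma)$; everything else is routine linear algebra and sign bookkeeping.
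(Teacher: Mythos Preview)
Your argument is correct, but it takes a more circuitous route than the paper's.  The paper observes that the linear map $(k_1,k_2,k_3)\mapsto(x_2,y,z)$ is invertible, so near $E_1$ the local unstable manifold can be written directly as a graph $x_1=x_1(x_2,y,z)$ over the last three \emph{phase} coordinates.  It then simply sets
\[
\gamma=\bigl\{\bigl(x_1(\epsilon,\epsilon,z),\,\epsilon,\,\epsilon,\,z\bigr):\ z\in[\sigma_1\epsilon,\sigma_2\epsilon]\bigr\}.
\]
Because $P_1$ and $P_2$ are the hyperplanes $z=\sigma_1 y$ and $z=\sigma_2 y$, the endpoints $z=\sigma_1\epsilon$ and $z=\sigma_2\epsilon$ land \emph{exactly} on $P_1$ and $P_2$, and the interior membership is immediate; the only thing to check is $0<x_1<1$, which follows from the tangency formula plus the $O(\epsilon^2)$ remainder.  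No transport step, no monotonicity, no intermediate value theorem.

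You instead work in the eigenvector coordinates $(k_1,k_2,k_3)$, build a segment on the tangent plane that overshoots both faces, push it forward by the $C^1$-close-to-identity graph map $\Psi$, and then recover the endpoints via the IVT applied to the monotone functions $G_1,G_2$.  This is sound---your sign computations $z-\sigma_1 y=(1-2\sigma_1)k_2$ and $\sigma_2 y-z=(\sigma_2-1+\sigma_1)k_2+(\sigma_2-\sigma_1)k_3$ are right, and the nonemptiness of your $\kappa$-interval is indeed equivalent to $\sigma_1>\tfrac12$---but the ``delicate step'' you flag is entirely an artifact of the parametrization.  Choosing $(x_2,y,z)$ as chart coordinates on $\varepsilon_1^u$, which is what the paper does, makes the faces $P_1,P_2$ coordinate hyperplanes and removes that difficulty altogether.
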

\begin{proof}
Note that the transformations: $(x_{2},y,z)\mapsto(k_{1},k_{2},k_{3})$,
\begin{equation*}
\begin{aligned}
x_{2}&=k_{1},\\
y&=k_{2}+k_{3},\\
z&=\frac{k_{2}(a_{31}-\mu)}{\lambda_{2}}+\frac{k_{3}(a_{31}-\mu)}{\lambda_{3}}
\end{aligned}
\end{equation*}
is invertible. Thus the plane can also be expressed as
$$
P=\left\{\left(1-\frac{x_{2}a_{12}}{\lambda+r_{2}}-\frac{\lambda_{2}\lambda_{3}z+r_{1}(a_{31}-\mu)y}{(a_{31}-\mu)(\lambda_{2}+r_{1})(\lambda_{3}+r_{1})},x_{2},y,z\right):x_{2},y,z\in\mathbb{R}\right\}.
$$F
Since the unstable manifold $\varepsilon_{1}^{u}$ is tangent to $P$ at $E_{1}$, there is a small $k>0$ such that the unstable manifold $\varepsilon_{1}^{u}$ of $E_{1}$ can be expressed as
$$
\varepsilon_{1}^{u}=\left\{\left(x_{1}(x_{2},y,z),x_{2},y,z\right):(x_{2},y,z)\in[0,k]^{3}\right\}
$$
where
\begin{equation}\label{x1}
\begin{aligned}
x_{1}(x_{2},y,z)=1-\frac{x_{2}a_{12}}{\lambda+r_{2}}-\frac{\lambda_{2}\lambda_{3}z+r_{1}(a_{31}-\mu)y}{(a_{31}-\mu)(\lambda_{2}+r_{1})(\lambda_{3}+r_{1})}+\eta(x_{2},y,z),
\end{aligned}
\end{equation}
and the function $\eta(x_{2},y,z)$ satisfies
\begin{equation}\label{x2}
\begin{aligned}
\eta(x_{2},y,z)=O(x_{2}^{2}+y^{2}+z^{2}) \ \ as\ \ (x_{2},y,z)\rightarrow(0,0,0).
\end{aligned}
\end{equation}
Select a sufficiently small $\epsilon>0$ with $k\geq\sigma_{2}\epsilon$. It follows from (\ref{x1}) and (\ref{x2}) that
\begin{equation}\label{x3}
\begin{aligned}
0<x_{1}(x_{2},y,z)<1,\ \ for\ x_{2}\in[0,\epsilon],\ y\in[0,\epsilon], \ z\in[\sigma_{1}y,\sigma_{2}y].
\end{aligned}
\end{equation}
Now, we define $\gamma\subset\varepsilon_{1}^{u}$ as
$$
\gamma=\left\{(x_{1}(\epsilon,\epsilon,z),\epsilon,\epsilon,z): z\in[\sigma_{1}\epsilon,\sigma_{2}\epsilon]\right\},
$$
and let $p_{1}^{*}=(x_{1}(\epsilon,\epsilon,\sigma_{1}\epsilon),\epsilon,\epsilon,\sigma_{1}\epsilon)$ and $p_{2}^{*}=(x_{1}(\epsilon,\epsilon,\sigma_{2}\epsilon),\epsilon,\epsilon,\sigma_{2}\epsilon)$. Then it is clear that $p_{i}^{*}\in P_{i}$, for $i=1,2$ and $\gamma$ is a curve connecting $p_{1}^{*}$ and $p_{2}^{*}$. Moreover, by (\ref{x3}), it is obvious that $\gamma\backslash\{p_{1}^{*},p_{2}^{*}\}\subset {\rm Int}({\Sigma})$ which is denoted the interior set of $\Sigma$.
\end{proof}

\begin{lemma}\label{L-7}
For each $c \ge c_*$, there is a point $p_{*}\in \varepsilon_{1}^{u}\cap {\rm Int}(\Sigma)$ such that the solution $\Phi_{t}(p_{*})$ of (\ref{profile-2}) through the $p_{*}$ stays in ${\rm Int}(\Sigma)$ for all $t\geq0$.
\end{lemma}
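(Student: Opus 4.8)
The plan is to run a Wazewski-type shooting argument on the arc $\gamma\subset\varepsilon_1^u$ produced by Lemma~\ref{L-6}, first for $c>c_*$ and then passing to the limit for $c=c_*$. Assume, toward a contradiction, that every $p\in\gamma$ has a forward orbit that leaves $\Sigma$, and let $\tau(p):=\inf\{t>0:\Phi_t(p)\notin\Sigma\}<\infty$ be the first exit time. By Lemma~\ref{L-4} a trajectory starting in $\Sigma$ cannot cross $\cup_{i=1}^5 Q_i$, so $\Phi_{\tau(p)}(p)\in P_1\cup P_2$; and by Lemma~\ref{L-5} the vector field is strictly transverse to the open faces $P_1,P_2$ and points out of $\Sigma$ there. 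Strict transversality together with continuous dependence on initial data makes $p\mapsto\tau(p)$ continuous on $\gamma$, and since the endpoints $p_1^*\in P_1$, $p_2^*\in P_2$ are themselves exit points, $\tau(p_1^*)=\tau(p_2^*)=0$. This is exactly the step where Lemma~\ref{L-5} does the real work: it rules out grazing contacts and exits through the lower-dimensional edges of $\Sigma$, which is what could otherwise destroy continuity of $\tau$.

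Then the map $r:\gamma\to P_1\cup P_2$, $r(p)=\Phi_{\tau(p)}(p)$, is continuous with $r(p_1^*)=p_1^*$ and $r(p_2^*)=p_2^*$. The arc $\gamma$ is connected, hence $r(\gamma)$ is a connected subset of $P_1\cup P_2$. But $P_1$ and $P_2$ are separated: on $P_1$ one has $Z=\sigma_1Y$, on $P_2$ one has $Z=\sigma_2Y$, with $Y>0$ and $\sigma_1<1<\sigma_2$, so $\overline{P_1}\cap P_2=P_1\cap\overline{P_2}=\emptyset$. Therefore $r(\gamma)$ lies entirely in $P_1$ or entirely in $P_2$, contradicting $r(p_1^*)\in P_1$, $r(p_2^*)\in P_2$. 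Hence some $p_*\in\gamma$ satisfies $\Phi_t(p_*)\in\Sigma$ for all $t\ge0$.

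It remains to improve $\Sigma$ to $\mathrm{Int}(\Sigma)$. By Lemma~\ref{L-6}, $p_*\in\gamma\setminus\{p_1^*,p_2^*\}\subset\mathrm{Int}(\Sigma)$, so in particular $X_1,X_2,Y>0$ at $p_*$. The faces $Q_1=\{X_1=0\}$, $Q_3=\{X_2=0\}$ and $Q_5=\{Y=Z=0\}$ are invariant for \eqref{profile-2}, so the orbit of $p_*$ cannot meet any of them without $p_*$ itself lying on it. The orbit cannot meet $P_1\cup P_2$, since it would then leave $\Sigma$. Finally, if $\Phi_{t_0}(p_*)\in Q_2\cup Q_4$ for some $t_0>0$, then (using $X_2>0$ along the orbit, again by invariance of $\{X_2=0\}$) the inward-pointing computations in Lemma~\ref{L-4} give $\dot X_1<0$ resp. $\dot X_2<0$ there, forcing the orbit to have been outside $\Sigma$ just before $t_0$, which is impossible. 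Hence $\Phi_t(p_*)\in\mathrm{Int}(\Sigma)$ for all $t\ge0$, which proves the lemma when $c>c_*$.

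For the borderline speed $c=c_*$ — where Lemmas~\ref{L-4}--\ref{L-6} were stated only for $c>c_*$ and, moreover, $\lambda_2$ and $\lambda_3$ in \eqref{eigenvalue} coalesce, so that the linearization of \eqref{profile-2} at $E_1$ acquires a $2\times2$ Jordan block — I would argue by continuity. Choose $c_n\downarrow c_*$ and, for each $n$, the point $p_*^{(n)}$ produced above for $c=c_n$; choosing the small parameter $\epsilon$ of Lemma~\ref{L-6} uniformly for $c$ near $c_*$, all the $p_*^{(n)}$ lie in a fixed compact subset of $\mathbb{R}^4$, so along a subsequence $p_*^{(n)}\to p_*$. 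Since $\sigma_1(c),\sigma_2(c)$ depend continuously on $c$ and solutions of \eqref{profile-2} depend continuously on $(c,\text{initial data})$ over compact time intervals, $\Phi_t^{c_*}(p_*)\in\Sigma(c_*)$ for all $t\ge0$; and the interior conclusion is recovered as in the previous paragraph, using that Lemmas~\ref{L-4} and \ref{L-5} remain valid verbatim at $c=c_*$ (indeed $\sigma_1(c_*)=1/2<1$, and the identity $\rho\,\sigma_1(1-\sigma_1)=a_{31}-\mu$ on which Lemma~\ref{L-5} rests continues to hold). I expect this degenerate endpoint case, together with the careful justification that the first-exit-time map is continuous, to be the only delicate part; the connectedness/retraction contradiction itself is routine once Lemmas~\ref{L-4}--\ref{L-6} are in hand.
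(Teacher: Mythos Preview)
Your argument for $c>c_*$ is essentially the paper's: both take the curve $\gamma\subset\varepsilon_1^u$ from Lemma~\ref{L-6}, use the outward transversality on $P_1\cup P_2$ from Lemma~\ref{L-5}, and invoke connectedness of $\gamma$ to produce $p_*$. The paper packages this by declaring the sets $\gamma_i=\{p\in\gamma:\Phi_t(p)\in P_i\text{ for some }t\ge0\}$ to be open, nonempty, and (implicitly) disjoint, whence $\gamma\setminus(\gamma_1\cup\gamma_2)\neq\emptyset$; your exit-time/retraction formulation is an equivalent and arguably cleaner version of the same Wazewski step.

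You in fact go further than the paper in two respects: the paper does not verify that the orbit remains in $\mathrm{Int}(\Sigma)$ rather than merely in $\Sigma$, and the paper simply asserts the conclusion for $c\ge c_*$ while all its supporting lemmas are stated only for $c>c_*$. Your limiting argument for $c=c_*$ is on the right track, but one point is missing: to conclude $p_*\in\varepsilon_1^u$ at the limiting speed you must invoke the continuous dependence of the local unstable manifold of $E_1$ on the parameter $c$ (which is standard even across the Jordan-block degeneration, since the unstable spectrum stays bounded away from the imaginary axis); without this, the limit of points on $\varepsilon_1^u(c_n)$ need not lie on $\varepsilon_1^u(c_*)$, and the lemma's requirement $p_*\in\varepsilon_1^u$ would be unverified.
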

\begin{proof}
 Let $\gamma\subset\varepsilon_{1}^{u}\cap {\rm Int}(\Sigma)$ be defined as in Lemma \ref{L-6}. We define subsets
$\gamma_{1}$ and $\gamma_{2}$ of $\gamma$ as follows:
$$
\gamma_{i}=\{p\in\gamma: \ there\ is \ a \ t\geq0 \ such \ that \ \Phi_{t}(p)\in P_{i}\} \ \ for \ i=1,\ 2.
$$
By Lemma \ref{L-6}, it is clear that $\gamma_{1}$ and $\gamma_{2}$ are nonempty, and by the Lemma \ref{L-5}, the vector field of (\ref{profile-2}) at each point in the plane $P_{i}(i=1,\ 2)$ points to the exterior of $\Sigma$ if $c\ge c_*$. By the continuity of solutions on initial values, we deduce that two sets $\gamma_{1}$ and $\gamma_{2}$ are open relative to $\gamma$. Hence, $\gamma\backslash( \gamma_{1}\cup\gamma_{2})\neq\emptyset$, because of connectedness of $\gamma$. Let $p_{*}\in\gamma\backslash (\gamma_{1}\cup\gamma_{2})$. Then the definitions of $\gamma$ and $\gamma_{i}$ imply that
\begin{align*}
\Phi_{t}(p_{*})\in {\rm Int}(\Sigma)\ \ for\  all\ \ t\geq0.
\end{align*}
\end{proof}
\subsection{Existence and Non-existence of Traveling wave solutions from $E_1$ to $E_*$}
Let us define an essential subsets of $\Sigma$:
$$
G=\{p\in {\rm Int}(\Sigma): \Phi_{p}(t)\in {\rm Int}(\Sigma)\  for\  all\  t\geq0\}.
$$
By Lemma \ref{L-7}, we know that $G$ is nonempty. The desired heteroclitic orbit from $E_{1}$ to $E_{*}$ will be obtained in $G$.
\begin{proposition}\label{P-1}
Let assumption $(H_3)$ hold. Then system (\ref{3pdemodel}) has a traveling wave solution connecting $E_{1}$ and $E_{*}$ for wave speed
$c>c_*$.
\end{proposition}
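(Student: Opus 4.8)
The plan is to produce the traveling wave as the heteroclinic orbit $\Phi_{t}(p_{*})$ of the Lienard system \eqref{profile-2}, where $p_{*}$ is the point supplied by Lemma \ref{L-7}. Note first that (H3) forces both (H1) and (H2), so $E_{*}$ exists, and since $c>c_{*}=2\sqrt{d(a_{31}-\mu)}$ the constants $\sigma_{1},\sigma_{2}$ and the wedge $\Sigma$ are well defined; thus Lemmas \ref{L-4}--\ref{L-7} all apply. Because $p_{*}$ lies on the curve $\gamma\subset\varepsilon_{1}^{u}$ inside the local unstable manifold of $E_{1}$, we get $\Phi_{t}(p_{*})\to E_{1}=(1,0,0,0)$ as $t\to-\infty$, which is the first asymptotic condition. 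It then remains to show $\Phi_{t}(p_{*})\to(u_{*},v_{*},w_{*},w_{*})$ as $t\to+\infty$; undoing the Lienard change of variables and the moving coordinate $\xi=x+ct$ turns $\Phi_{t}(p_{*})$ into a nonnegative profile $(U,V,W)$ solving \eqref{profile} and meeting \eqref{AB}, that is, a traveling wave of \eqref{3pdemodel} from $E_{1}$ to $E_{*}$.

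For the forward limit I would use a Lyapunov function together with LaSalle's invariance principle on the set $G$. The candidate is a Volterra-type functional built on the one in Theorem \ref{EstarGAS}:
\[
V(X_{1},X_{2},Y,Z)=\tfrac{a_{21}}{a_{12}}\big(X_{1}-u_{*}\ln X_{1}\big)+\big(X_{2}-v_{*}\ln X_{2}\big)+\tfrac{a_{13}a_{21}}{a_{12}a_{31}}\big(Y-w_{*}\ln Y\big)+R(Y,Z),
\]
where the quadratic correction $R$, vanishing with its gradient at $(w_{*},w_{*})$, is introduced because $\dot Y=\rho(Y-Z)$ and $\dot Z=a_{31}Y(X_{1}-u_{*})$ reduce to the ODE case of Theorem \ref{EstarGAS} only along $\{Y=Z\}$. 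The aim is to choose $R$ so that, along \eqref{profile-2}, $\dot V\le 0$ on all of $\Sigma$ with $\{\dot V=0\}\cap\mathrm{Int}(\Sigma)\subseteq\{X_{1}=u_{*},\,X_{2}=v_{*}\}$; this is the analogue of the identity $\dot V=-\frac{r_{1}a_{21}}{a_{12}}(u-u_{*})^{2}-r_{2}(v-v_{*})^{2}$ from Theorem \ref{EstarGAS}, now carrying extra nonpositive terms produced by the $(Y,Z)$-coupling and controlled using the constraint $\sigma_{1}Y\le Z\le\sigma_{2}Y$.

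Granting such a $V$, the rest is routine. Since $p_{*}\in\mathrm{Int}(\Sigma)$ has $X_{1},X_{2},Y>0$ we have $V(p_{*})<\infty$, and as $\dot V\le 0$ while, by Lemmas \ref{L-4}, \ref{L-5} and \ref{L-7}, $\Phi_{t}(p_{*})$ stays in $\mathrm{Int}(\Sigma)$ for all $t\ge 0$, the forward orbit is confined to $\{V\le V(p_{*})\}\cap\Sigma$. The logarithmic terms keep this sublevel set bounded away from $\{X_{1}=0\}\cup\{X_{2}=0\}\cup\{Y=0\}$, the inequalities $0\le X_{1}\le1$, $0\le X_{2}\le1+a_{21}/r_{2}$, $\sigma_{1}Y\le Z\le\sigma_{2}Y$ bound the remaining coordinates, and hence $\overline{\{\Phi_{t}(p_{*}):t\ge0\}}$ is a compact subset of $\mathrm{Int}(\Sigma)$; its $\omega$-limit set $\Omega$ is then a nonempty, compact, connected, invariant subset of $\mathrm{Int}(\Sigma)$ (it cannot meet $P_{1}\cup P_{2}$, where solutions leave $\Sigma$, nor $Q_{2}\cup Q_{4}$, where the field points into $\Sigma$, since either would contradict two-sided invariance, and it cannot meet $Q_{1}\cup Q_{3}\cup Q_{5}$, where $V=+\infty$). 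By LaSalle's invariance principle $\Omega$ lies in the largest invariant subset of $\{\dot V=0\}\subseteq\{X_{1}=u_{*},X_{2}=v_{*}\}$; putting $X_{1}\equiv u_{*}$, $X_{2}\equiv v_{*}$ in \eqref{profile-2} gives $0=\dot X_{1}=u_{*}a_{13}(w_{*}-Y)$, so $Y\equiv w_{*}$, and then $0=\dot Y=\rho(w_{*}-Z)$ forces $Z\equiv w_{*}$. Therefore that largest invariant set is the singleton $\{(u_{*},v_{*},w_{*},w_{*})\}$ and $\Phi_{t}(p_{*})\to(u_{*},v_{*},w_{*},w_{*})$ as $t\to+\infty$, as required.

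The main obstacle is the construction and verification of the correction $R(Y,Z)$ making $\dot V\le0$ on \emph{all} of $\Sigma$: unlike in Theorem \ref{EstarGAS}, the $Y$-equation feeds on $Z$ rather than on $Y$, so the $(X_{1}-u_{*})$-cross term produced by the $X_{1}$-equation is no longer cancelled for free and must be balanced through the Lienard variable, which is precisely where the standing hypothesis $c>c_{*}$ (equivalently $\rho>4(a_{31}-\mu)$ and $0<\sigma_{1}<1<\sigma_{2}$) is consumed; this parallels the constructions of Huang \cite{Huang2012,Huang2016}. Once $V$ is in hand the backward convergence along $\varepsilon_{1}^{u}$, the compactness of the forward orbit and the LaSalle identification of $\Omega$ are all standard.
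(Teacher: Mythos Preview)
Your overall architecture is exactly the paper's: trap the orbit $\Phi_t(p_*)$ supplied by Lemma~\ref{L-7} in $G\subset{\rm Int}(\Sigma)$, and then force $\omega$-convergence to $(u_*,v_*,w_*,w_*)$ via a Lyapunov function and LaSalle. Your discussion of precompactness of the forward orbit and of why $\Omega$ cannot touch $\partial\Sigma$ is, if anything, more careful than what the paper writes.

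The genuine gap is the one you flag yourself: you never construct $R$, and your ansatz (``quadratic, vanishing with its gradient at $(w_*,w_*)$'') would not lead you to the right object. The paper's correction is not quadratic; it takes
\[
L=\frac{a_{21}}{a_{12}}\big(X_1-u_*\ln X_1\big)+\big(X_2-v_*\ln X_2\big)
+\frac{a_{13}a_{21}}{a_{12}a_{31}}\Big(Y-w_*\ln Y-(Y-Z)\big(1-\tfrac{w_*}{Y}\big)\Big),
\]
i.e.\ $R(Y,Z)=-\dfrac{a_{13}a_{21}}{a_{12}a_{31}}(Y-Z)\big(1-\tfrac{w_*}{Y}\big)$. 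The point of this specific choice is that the entire third block simplifies algebraically to $\dfrac{a_{13}a_{21}}{a_{12}a_{31}}\big(Z+w_*-w_*\ln Y-\tfrac{w_*Z}{Y}\big)$; differentiating along \eqref{profile-2} and using $\dot Z=a_{31}Y(X_1-u_*)$, $\dot Y=\rho(Y-Z)$ then gives
\[
\dot L=-\frac{a_{21}r_1}{a_{12}}(X_1-u_*)^2-r_2(X_2-v_*)^2-\frac{a_{13}a_{21}}{a_{12}a_{31}}\,\rho w_*\,\frac{(Y-Z)^2}{Y^2}\le 0,
\]
with the $(X_1-u_*)(Y-w_*)$ cross term cancelling exactly as in Theorem~\ref{EstarGAS}. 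Note that the wedge inequalities $\sigma_1Y\le Z\le\sigma_2Y$ play no role in this computation; the hypothesis $c>c_*$ is consumed in Lemmas~\ref{L-4}--\ref{L-7} (to make $\sigma_1$ real and trap the orbit), not in the Lyapunov step, contrary to what your last paragraph suggests. With this explicit $L$ in hand, your sublevel-set boundedness argument and the LaSalle identification of the maximal invariant set in $\{\dot L=0\}$ as the singleton $\{(u_*,v_*,w_*,w_*)\}$ go through exactly as you wrote.
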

\begin{proof}
Define the Lyapunov function $L: [0, \infty)\rightarrow \mathbb{R}$, which is associated solutions of (\ref{profile-2}) with initial in $G$,
\begin{equation*}
\begin{aligned}
L(t)=&\frac{a_{21}}{a_{12}}\left(X_{1}-u_{*}\ln X_{1}\right)+\left(X_{2}-v_{*}\ln X_{2}\right)\\
&+\frac{a_{13}a_{21}}{a_{12}a_{31}}\left(Y-w_{*}\ln Y -(Y-Z)(1-\frac{1}{Y})\right),
\end{aligned}
\end{equation*}
and it is clear that $L$ is well defined and continuously differentiable on $G$. Moreover, the orbital derivative of $L(t)$ along (\ref{profile-2}) is
\begin{equation}\label{LDOT}
\begin{aligned}
\dot{L}(t)=&\frac{a_{21}}{a_{12}}\left(\dot{X_{1}}-u_{*}\frac{\dot{X_{1}}}{X_{1}}\right)+\left(\dot{X_{2}}-v_{*}\frac{\dot{X_{2}}}{X_{2}}\right)
+\frac{a_{13}a_{21}}{a_{12}a_{31}}\left(\dot{Z}-w_{*}\frac{\dot{Z}}{Z}-\rho\frac{(Y-Z)^{2}}{Y^{2}}\right)\\
=&\frac{a_{21}}{a_{12}}(X_{1}-u_{*})\left(r_{1}(u_{*}-X_{1})+a_{12}(v_{*}-X_{2})+a_{13}(w_{*}-Y)\right)\\
&+(X_{2}-v_{*})\left(r_{2}(v_{*}-X_{2})+a_{21}(X_{1}-u_{*})\right)\\
& +\frac{a_{13}a_{21}}{a_{12}a_{31}}(Y-w_{*})\left(a_{13}(X_{1}-u_{*})\right)-\frac{\rho a_{13}a_{21}}{a_{12}a_{31}}\frac{(Y-Z)^{2}}{Y^{2}}\\
=&-\frac{a_{21}r_{1}}{a_{12}}(X_{1}-u_{*})^{2}-r_{2}(X_{2}-v_{*})^{2}
-\frac{\rho a_{13}a_{21}}{a_{12}a_{31}}\frac{(Y-Z)^{2}}{Y^{2}}\le0.
\end{aligned}
\end{equation}
Hence, by the LaSalle's Invariance Principle, the $\omega$-limit set of any solution of (\ref{profile-2}) is contained in the largest invariant subset of $\{dL/dt=0\}=\{(u_{*},v_{*},W,W): W>0\}$, which is the singleton $\{(u_{*}, v_{*},w_{*},w_{*})\}$.
This completes the proof.
\end{proof}
\begin{proposition}\label{P-2}
The system (\ref{3pdemodel}) does not have a positive traveling wave solution connecting $E_{1}$ and $E_{*}$ for wave speed $0<c<c_*$.
\end{proposition}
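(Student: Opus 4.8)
The plan is to derive a contradiction from the scalar second–order equation satisfied by the $W$–component alone, exploiting the fact that for $c<c_*$ the constant–coefficient linearization of that equation at $E_1$ is oscillatory. Suppose, to the contrary, that for some $c\in(0,c_*)$ system \eqref{3pdemodel} possesses a positive traveling wave $(U,V,W)$ connecting $E_1$ and $E_*$. Then $W$ is a classical solution on $\mathbb{R}$ of
\[
dW'' - cW' + \bigl(-\mu + a_{31}U(\xi)\bigr)W = 0 ,
\]
with $U(\xi)\to 1$ as $\xi\to-\infty$ by the boundary condition at $E_1$, and $W(+\infty)=w_*>0$. First I would note that positivity of the wave in fact forces $W(\xi)>0$ for every $\xi$: if $W$ were merely nonnegative with an interior zero, it would attain a minimum there with vanishing derivative, and uniqueness for this linear second–order ODE would then give $W\equiv 0$, contradicting $W(+\infty)=w_*>0$.

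Next I would eliminate the first–order term by the Lienard–type substitution $W(\xi)=e^{c\xi/(2d)}P(\xi)$; since $e^{c\xi/(2d)}>0$, the new unknown $P$ is still strictly positive on $\mathbb{R}$, and a direct computation shows that it solves
\[
d P'' + \Bigl(-\mu + a_{31}U(\xi) - \tfrac{c^2}{4d}\Bigr)P = 0 .
\]
Because $c<c_*=2\sqrt{d(a_{31}-\mu)}$ we have $c^2/(4d)<a_{31}-\mu$, so, using $U(\xi)\to1$, the coefficient in this equation tends to $m:=(a_{31}-\mu)-c^2/(4d)>0$ as $\xi\to-\infty$. Hence there exist $\xi_0\in\mathbb{R}$ and a constant $q_0\in(0,m)$ with $-\mu+a_{31}U(\xi)-c^2/(4d)\ge q_0>0$ for all $\xi\le\xi_0$.

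Finally I would apply the Sturm comparison theorem on the half–line $(-\infty,\xi_0]$, comparing the equation for $P$ with the constant–coefficient equation $d y''+q_0 y=0$, whose solution $y(\xi)=\sin\!\bigl(\sqrt{q_0/d}\,(\xi-\xi_0)\bigr)$ has zeros at $\xi_0-k\pi\sqrt{d/q_0}$, $k=0,1,2,\dots$, accumulating at $-\infty$. Since the coefficient in the $P$–equation dominates $q_0$ on this half–line, $P$ must vanish between any two consecutive zeros of $y$, so $P$ has infinitely many zeros in $(-\infty,\xi_0]$, contradicting $P>0$. This shows that \eqref{3pdemodel} admits no positive traveling wave from $E_1$ to $E_*$ when $0<c<c_*$. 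The scheme presents no real obstacle; the only two points that need a word of justification are the upgrade from ``positive'' to pointwise positivity of $W$ (the ODE–uniqueness remark above, which transfers to $P$) and the elementary check that $U(\xi)\to1$ keeps the transformed coefficient above a positive constant near $-\infty$, which is immediate once $c<c_*$. Alternatively, one may put the $W$–equation into the self–adjoint form $\bigl(e^{-c\xi/d}W'\bigr)' + \tfrac1d e^{-c\xi/d}\bigl(-\mu+a_{31}U\bigr)W=0$ and invoke Sturm's oscillation theorem directly; the oscillation of the asymptotic constant–coefficient problem at $-\infty$ is the crux either way.
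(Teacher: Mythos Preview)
Your argument is correct. The paper's own proof is a one-paragraph sketch that proceeds differently: it works in the four-dimensional system \eqref{profile-2}, observes that for $c<c_*$ the eigenvalues $\lambda_2,\lambda_3$ of the linearization at $E_1$ become a complex conjugate pair with positive real part, and then notes that since the eigenvector $h_1$ associated with the remaining unstable real eigenvalue has vanishing $Y$-component, any orbit on the unstable manifold of $E_1$ with nontrivial $W$-component must spiral in the $(Y,Z)$-plane as $t\to-\infty$, forcing $W$ to change sign. Your route instead isolates the scalar second-order equation satisfied by $W$, removes the first-order term via $W=e^{c\xi/(2d)}P$, and appeals directly to the Sturm comparison theorem on the resulting equation $dP''+\bigl(a_{31}U-\mu-c^2/(4d)\bigr)P=0$, whose coefficient is eventually bounded below by a positive constant near $-\infty$. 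The two arguments capture the same oscillation mechanism---$c<c_*$ is exactly the condition that the characteristic roots of the limiting $W$-equation at $E_1$ become complex---but yours has the advantages of being fully self-contained, of not requiring any invariant-manifold machinery or the identification of the eigenvector structure, and of making the passage from the linearization to the nonlinear problem completely transparent (the variable coefficient $a_{31}U(\xi)$ is handled by Sturm comparison rather than by an asymptotic expansion on the unstable manifold). The paper's approach, on the other hand, fits naturally with the preceding analysis of $\varepsilon_1^u$ already carried out for the existence proof.
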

\begin{proof}
If $0<c<c_*$, then $\lambda_{2}$ and $\lambda_{3}$ in (\ref{eigenvalue}) are a pair of complex eigenvalues with positive real parts. By looking at the associated eigenvectors given in \eqref{eigenvectors}, one is able to conclude that if a solution is in the unstable manifold of the equilibrium $E_1$, then its $w(t)$ component can not keep nonnegative all the time when the solution converge to $E_1$ as $t\to-\infty$. This completes the proof.
\end{proof}

\section{Numerical Simulations and Brief Discussions}

We close this work by performing some interesting numerical simulations with \verb-FiPy- which is an object oriented PDE solver, written in Python \cite{FiPy:2009}. Some brief discussions and  biological implications are also given.

\subsection{Numerical Simulations}
Consider system \eqref{3pdemodel} in one-dimensional bounded space $[0, 10]$ with Neumann boundary condition and parameters, $r_1=0.7$, $r_2=0.3$, $\mu=0.15$, $a_{12}=0.15$,  $a_{13}=0.5$, $a_{21}=0.2$, $a_{31}=0.5$. It is easy to check that the assumptions (H1)-(H3) are satisfied for these parameters. Three cases of initial functions are taken in the following.
\begin{enumerate}[(i)]
\item Let $u(x, 0)=0.2$ and $v(x, 0)=0.1$ for all $x$, and
\begin{equation*}
w(x, 0)=\left\{
\begin{aligned}
0.08	 && \text{ if } 4.8\le x\le 5.2,\\
0 && \text{ otherwise.}
\end{aligned}
\right.
\end{equation*}

\item Let $u(x, 0)=1$ for all $x$,
\begin{equation*}
v(x, t)=\left\{
\begin{aligned}
0.1	 && \text{ if } 0\le x\le 5,\\
0 && \text{ otherswise.}
\end{aligned}
\right.
\end{equation*}
\begin{equation*}
w(x, 0)=\left\{
\begin{aligned}
0.1	 && \text{ if } 5\le x\le 10,\\
0 && \text{ otherwise.}
\end{aligned}
\right.
\end{equation*}

\item Let $v(x, 0)=1$ for all $x$,
\begin{equation*}
u(x, t)=\left\{
\begin{aligned}
0.15	 && \text{ if } 0\le x\le 5,\\
0 && \text{ otherwise.}
\end{aligned}
\right.
\end{equation*}
\begin{equation*}
w(x, 0)=\left\{
\begin{aligned}
0.1	 && \text{ if } 0\le x\le 5,\\
0 && \text{ otherwise.}
\end{aligned}
\right.
\end{equation*}
\end{enumerate}

\begin{figure}
\subfigure[initial function]{
   \includegraphics[scale=0.4]{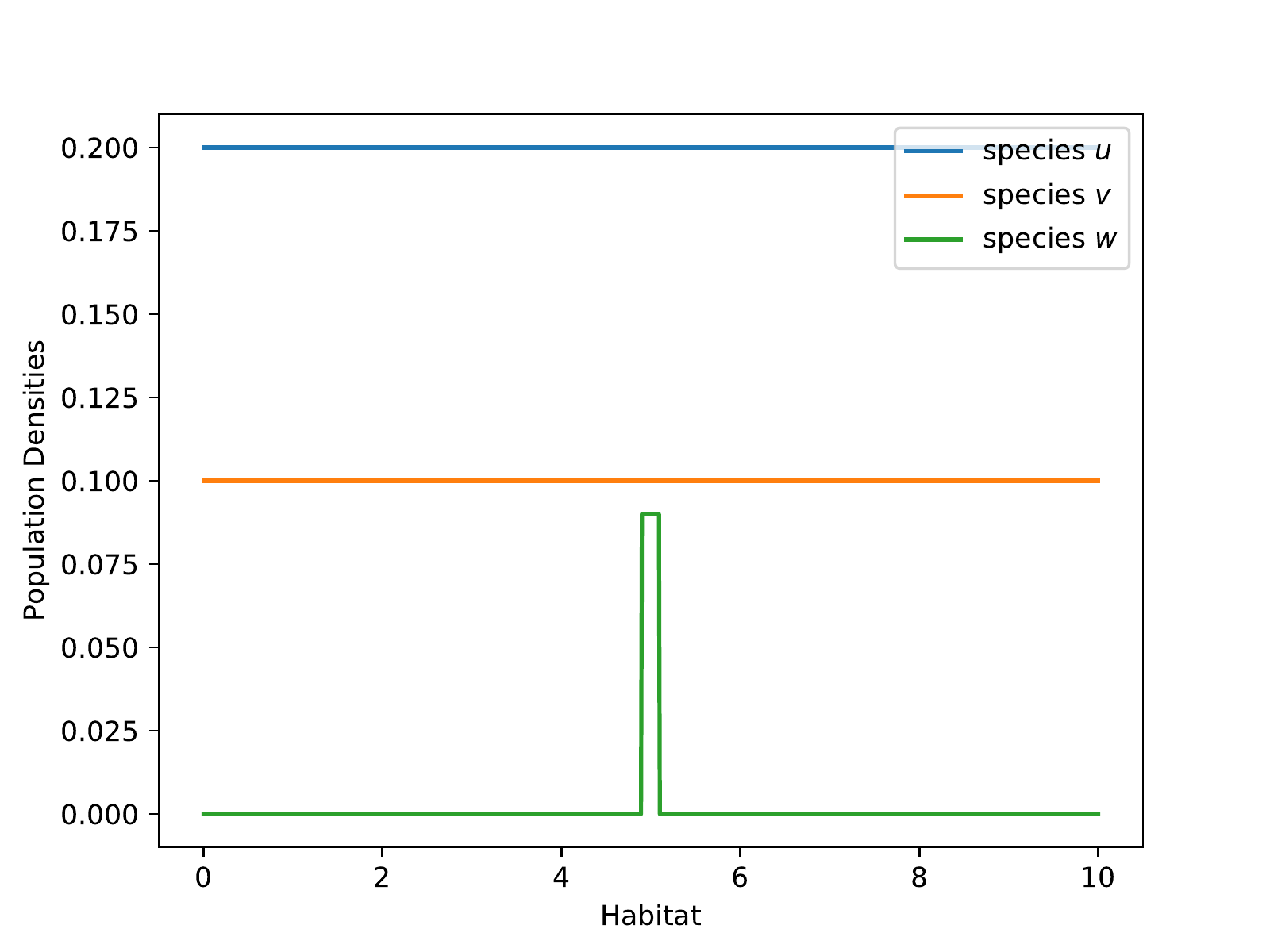}
}
\subfigure[transition state]{
    \includegraphics[scale=0.4]{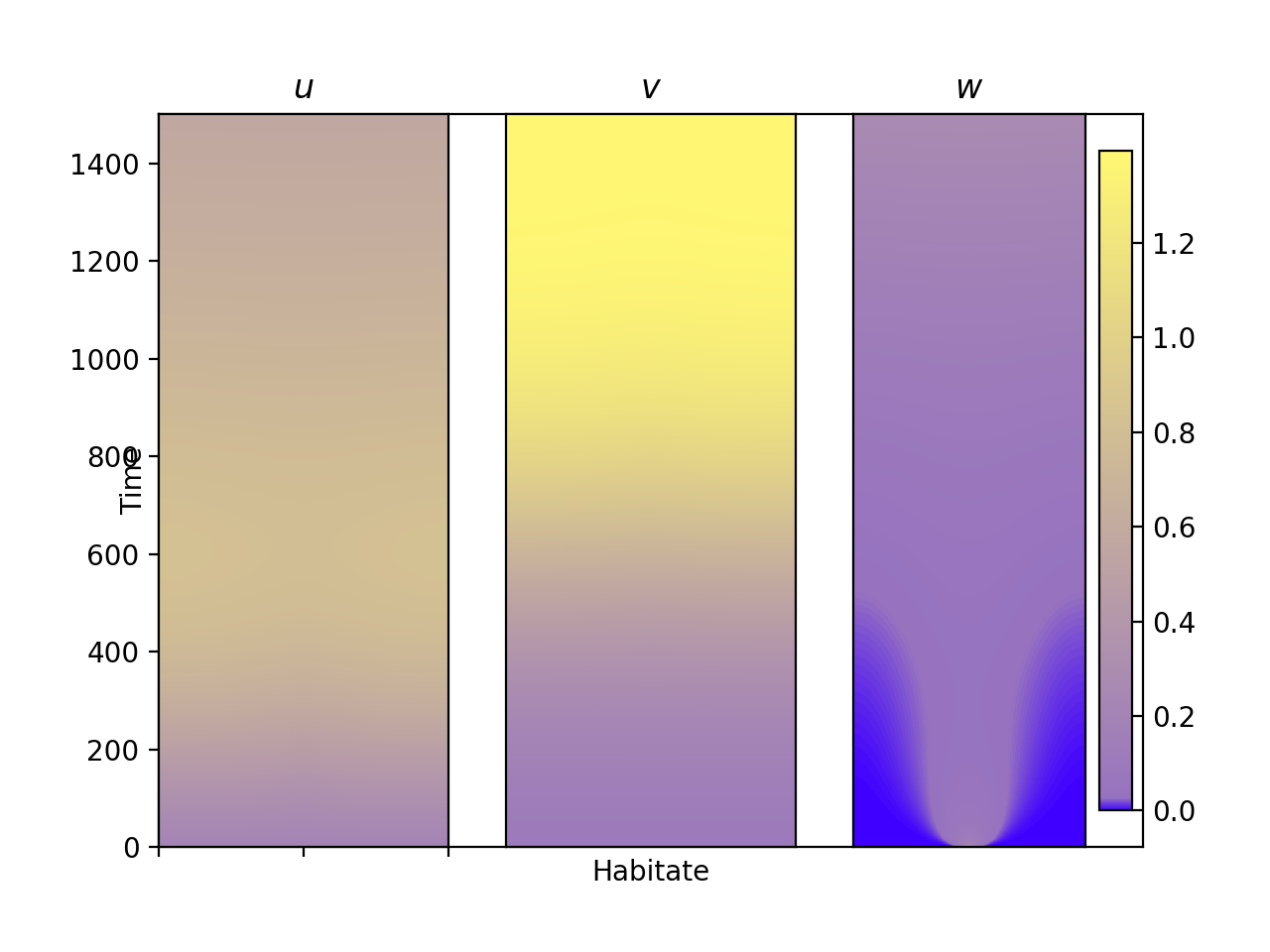}
}
\caption{Panel (a) is the initial functions of species $u$, $v$, and $w$, respectively. In Panel (b), the horizontal axis is the one-dimensional spatial variable represented the habitat, and the vertical axis is the temporal variable. }
\end{figure}

\begin{figure}
\subfigure[initial function]{
   \includegraphics[scale=0.4]{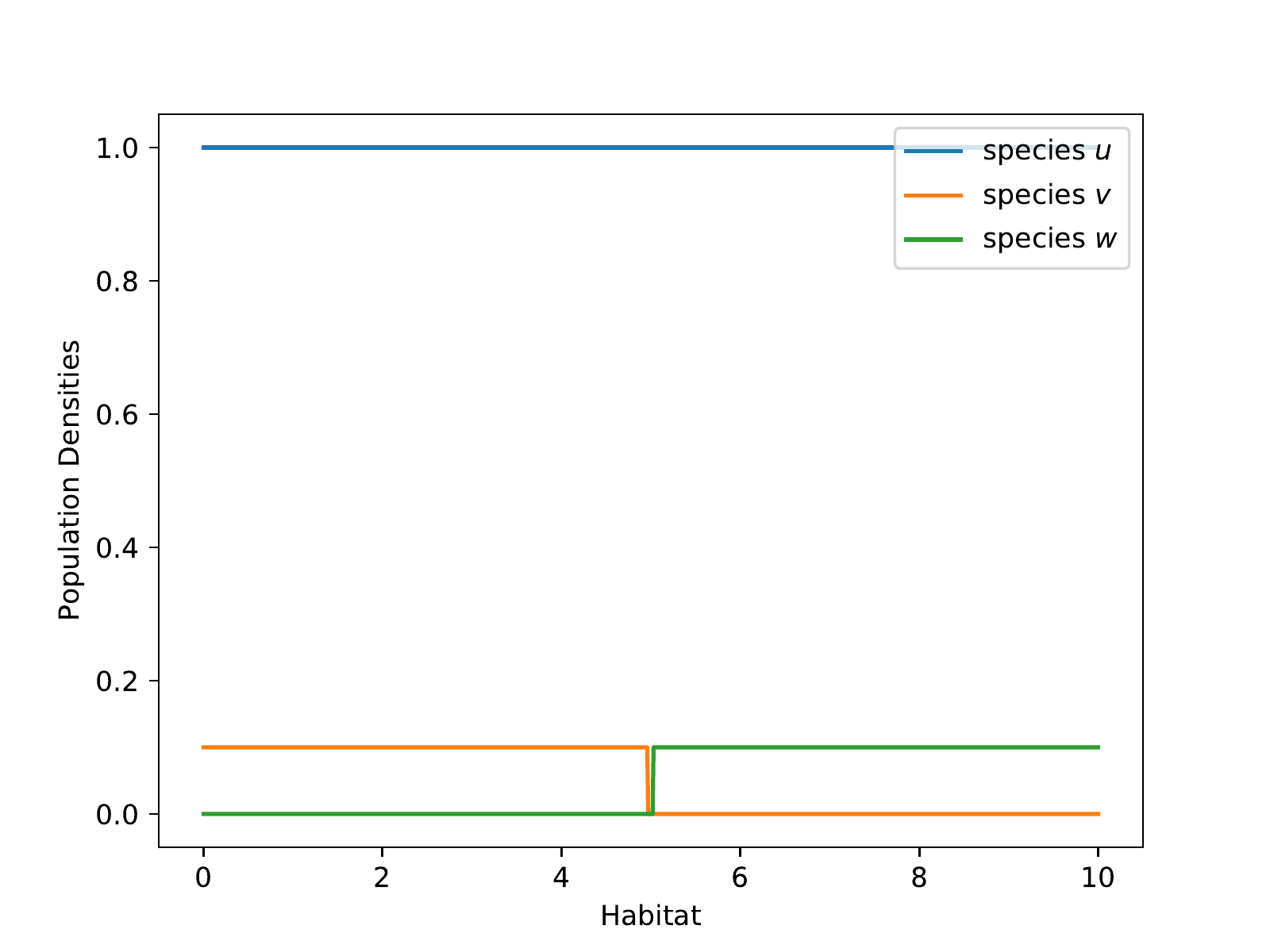}
}
\subfigure[transition state]{
    \includegraphics[scale=0.4]{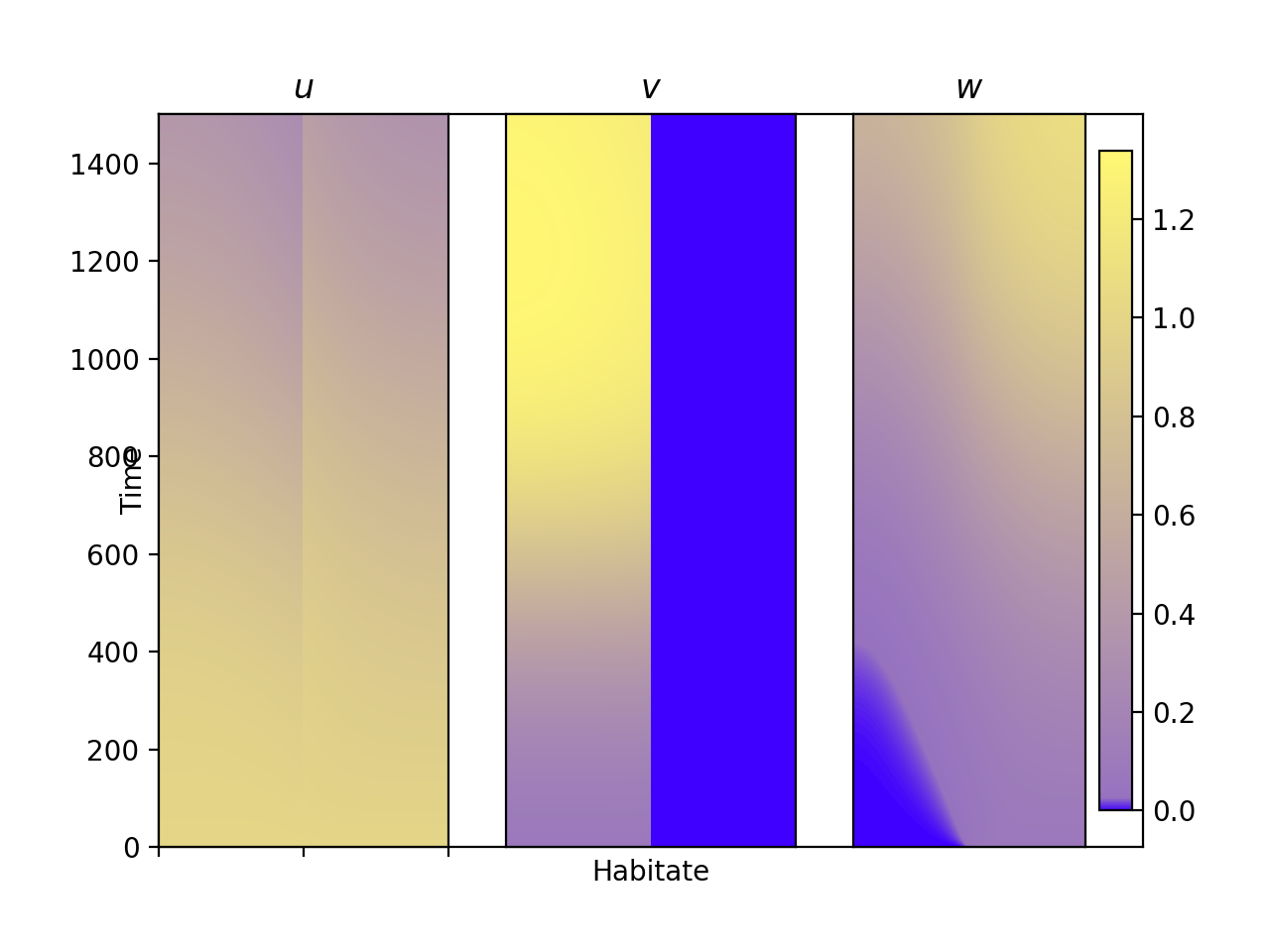}
}
\caption{Panel (a) is the initial functions of species $u$, $v$, and $w$, respectively. In Panel (b), the horizontal axis is the one-dimensional spatial variable represented the habitat, and the vertical axis is the temporal variable.}
\end{figure}

\begin{figure}
\subfigure[initial function]{
   \includegraphics[scale=0.4]{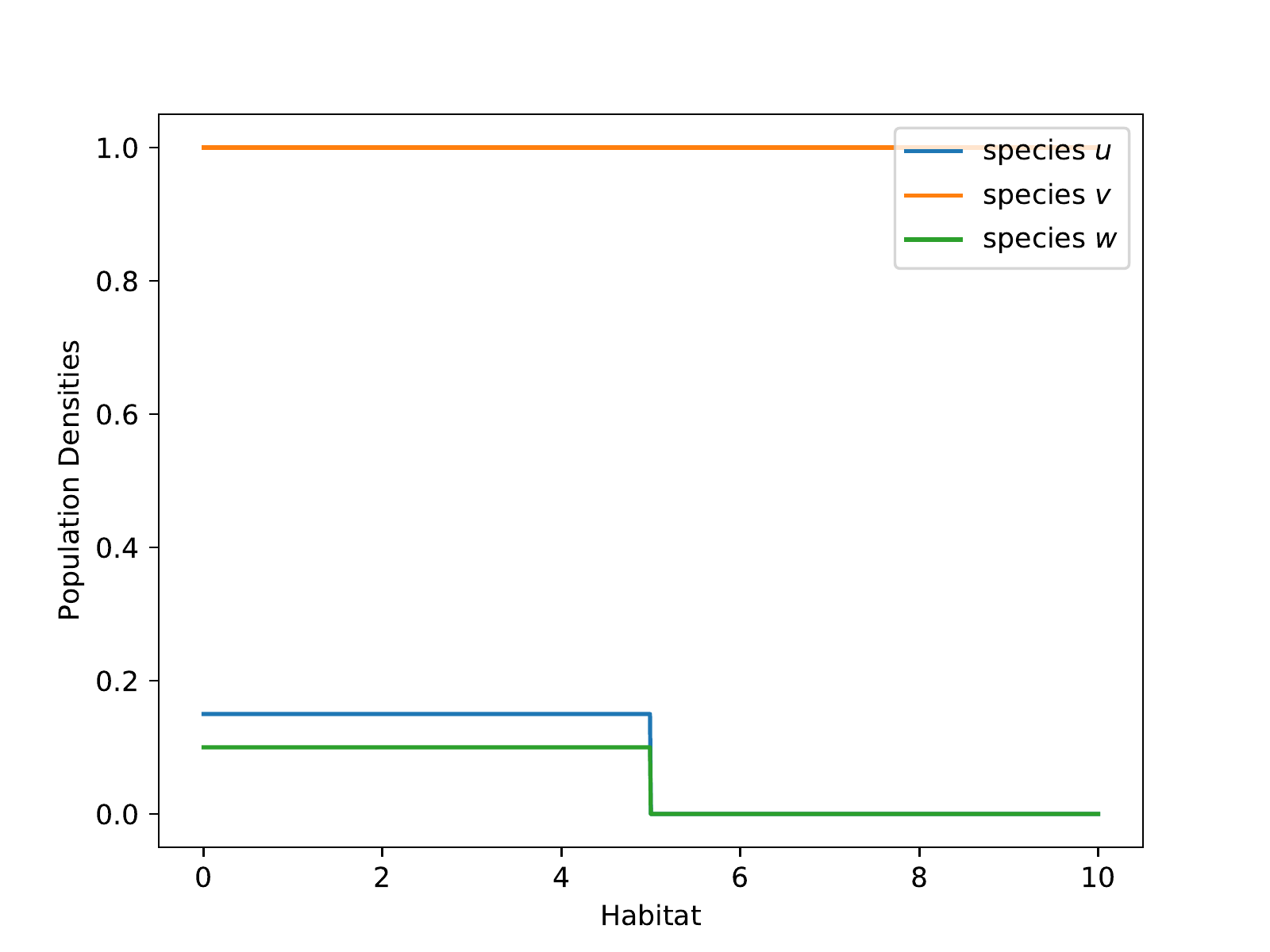}
}
\subfigure[transition state]{
    \includegraphics[scale=0.4]{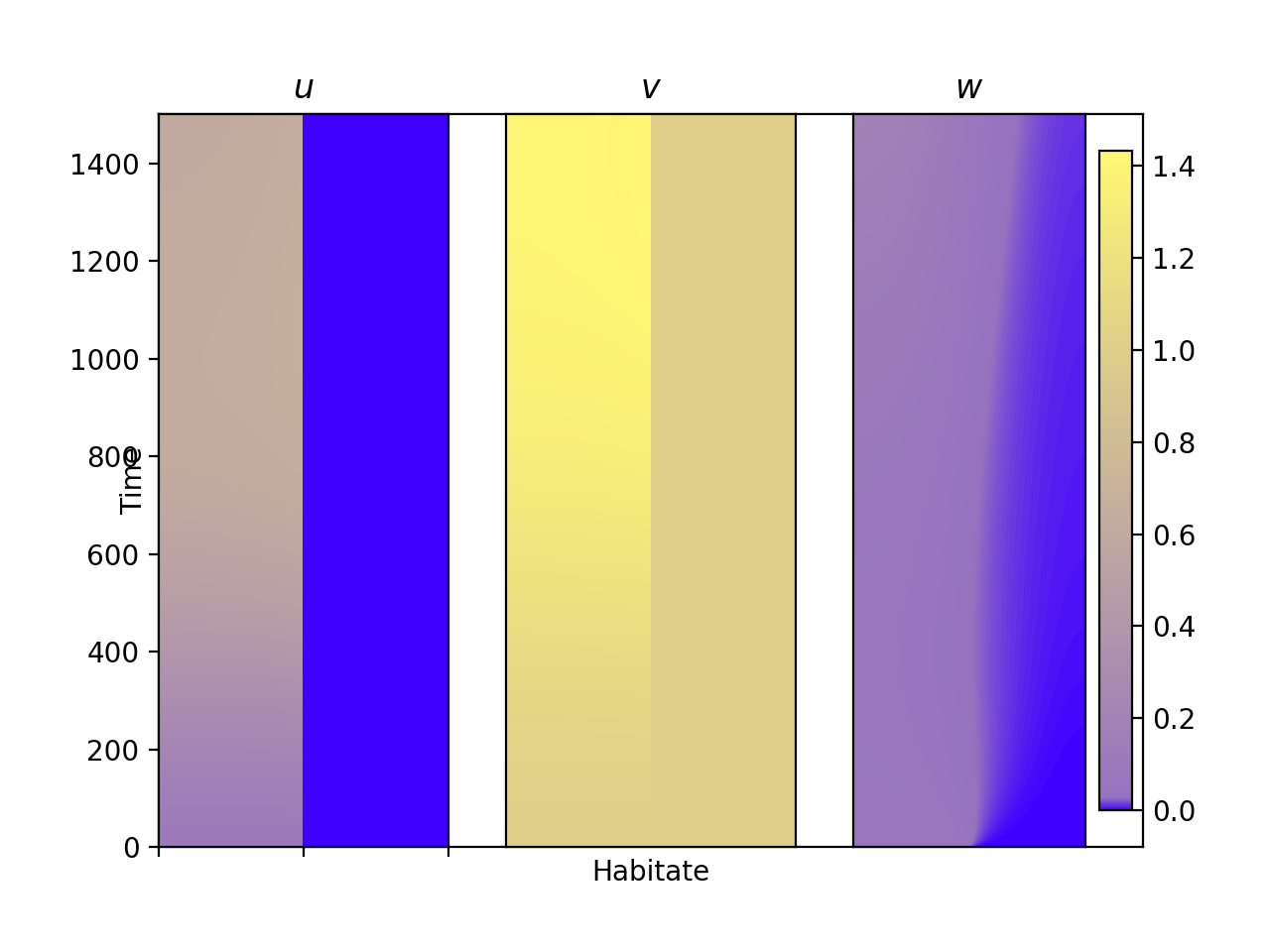}
}
\caption{Panel (a) is the initial functions of species $u$, $v$, and $w$, respectively. In Panel (b), the horizontal axis is the one-dimensional spatial variable represented the habitat, and the vertical axis is the temporal variable.}
\end{figure}
\noindent Please refer Figure 1-3(a) for graphical representations of initial functions.
\medskip

For the first case, we simulate that three species $u$, $v$, and $w$ migrate into a new habitat simultaneously. The initial condition is close to $E_0$ at the  time $t=0$. Note that species $w$ appear in some places of the habitat for $t=0$, and it diffuses to the whole habitat as time increasing. Meanwhile, species $u$ is affected by the diffusion of species $w$ negatively, and species $v$ is affected positively by $u$. So, eventually, all population densities of all species approach to the positive spatial homogeneous state solution $E_*$ of \eqref{3pdemodel}. We can see easily the process of transition from $E_0$ to $E_*$ presented in Figure 1 (b).

For case 2, different initial functions which are close to $E_1$ at time $t=0$ are considered. We would like to simulate that species $v$ and $w$ invade simultaneously into a new habitat with saturated species $u$. Note that species $v$ cannot diffuse, so species $v$ appears in the left half space showed in Figure 2(b) for all time. On the left half space, the transition process from $E_1$ to $E_*$ can be observed clearly. However, on the right half space, we conjecture that the transition process from $E_1$ to $E_{13}$ happens.

For the last case, we set the initial functions close to $E_2$ which is saturated by species $v$. And species $u$ and $w$ migrate to the left half space of the habitat with little populations at time $t=0$. The transition process from $E_2$ to $E_*$ on the left half space can also be observed. On the right half space of habitat, $u=w=0$ for time $t=0$. However, species $w$ can diffuse to the right half space of habitat. We conjecture that the population of $w$ in the right half space of habitat is proportional to the diffusion coefficient $d$.

\subsection{Brief Discussions}

In this work, for system without diffusion \eqref{3odemodel}, we have showed that if (H1) doest not hold then $\lim_{t\to\infty}u(t)=0$ and $\lim_{t\to\infty}w(t)=0$ (Lemma \ref{L-2}). It can be showed that  system \eqref{3odemodel} asymptotically approaches to a one-dimensional subsystem only involving species $v$, and this implies that $E_2$ is globally asymptotically stable by Markus limiting theorem \cite{Markus1956}. If (H2) doest not hold then we only have $\lim_{t\to\infty}w(t)=0$ (Lemma \ref{L-3}). However, we obtain a global results of $E_{12}$ in Theorem \ref{E12GAS} under the assumptions (H1) and $\mu\ge a_{31}u^*_{12}$. It is clearly that $\mu\ge a_{31}$ is a sufficient condition of $\mu\ge a_{31}u^*_{12}$ because of $0<u^*_{12}<1$. Finally, the assumption (H3) is a sufficient and necessary condition for the existence and global stability of the positive equilibrium $E_*$ (Thoerem \ref{EstarGAS}). Since the positive equilibrium $E_*$ does not exist if (H3) is not true, and condition (H3) can be rewritten as the form, 
\[
0< r_{1}r_{2}a_{31}-r_{1}r_{2}\mu-a_{12}a_{31}r_{2}-a_{12}a_{21}\mu=(r_1r_2+a_{12}a_{21})(u^*_{12}a_{31}-\mu).
\]
All global dynamics with respective to essential parameters are summarized in Table 1.
\begin{table}[htp]
\begin{center}
\begin{tabular}{|l|c|c|c|}
\hline
&   $E_2$ & $E_{12}$  & $E_*$\\
\hline
$r_1\le a_{12}$ & GAS & $\nexists$ & $\nexists$ \\
\hline
$r_1> a_{12}$  and $\mu\ge a_{31}u^*_{12}$ & Saddle & GAS & $\nexists$ \\
\hline
$r_1> a_{12}$  and $\mu<a_{31}u^*_{12}$  & Saddle & Saddle & GAS \\
\hline
\end{tabular}
\caption{Classification of dynamics of equilibria with respective to all essential parameters}\label{table1}
\end{center}
\end{table}%

For system with diffusion \eqref{3pdemodel} and using the high-dimensional shooting method, we have show the existence of transition between two different spatial homogeneous states, that is, the existence of traveling wave solution from $E_1$ to $E_*$ for $c>c_*=2\sqrt{d(a_{31}-\mu)}$ which is called the minimal speed. By the similar manner, motivated by Table 1, it is naturally to consider the possibilities of other cases. For example, the traveling wave solution from $E_2$ or $E_{12}$ to $E_*$ under the assumptions $r_1>a_{12}$ and $\mu<a_{31}u^*_{12}$. Or the traveling wave solution from $E_1$ or $E_2$ to $E_{12}$ under the assumptions $r_1>a_{12}$ and $\mu\ge a_{31}u^*_{12}$. We left these open problems to be considered.
%\begin{enumerate}
%\item Classification all possible transition states form one unstable equilibrium to another stable equilibrium.
%\begin{table}[htp]
%\caption{Possible}
%\begin{center}
%\begin{tabular}{|c|c|c|c|c|c|c|}
%\hline
%& $E_0$ & $E_1$ & $E_2$ & $E_{12}$ & $E_{13}$ & $E_*$\\
%\hline
%$E_0$ & $\times$ &$\times$&$\times$&  & $\times$ & \\
%\hline
%$E_1$ &$\times$& $\times$ &$\times$&& \checkmark& \checkmark \\
%\hline
%$E_2$ &$\times$&& $\times$ & \checkmark&$\times$&\\
%\hline
%$E_{12}$ & $\times$&&& $\times$ &$\times$&\\
%\hline
%$E_{13}$ &$\times$&&&& $\times$ &\\
%%\hline 
%%$E_*$ &&&&&& $\times$\\
%\hline
%\end{tabular}
%\end{center}
%\label{default}
%\end{table}%
%\item Is $c_*$ the minimal speed?
%\end{enumerate}

%%%%%%%%%%%%%%%%%%%%%%%%%%%%%%%%%%
%\bibliographystyle{abbrv}
%\bibliography{TWS-1214}

\end{document}